\definecolor{blue}{rgb}{0,0,0.7}
\definecolor{red}{rgb}{0.75, 0, 0}
\definecolor{blue}{rgb}{0,0,0.7}
\definecolor{red}{rgb}{0.75, 0, 0}
\pgfplotsset{compat=1.15}
\definecolor{blue}{rgb}{0,0,0.7}
\definecolor{red}{rgb}{0.75, 0, 0}
\tikzstyle arrowstyle=[scale=1.1]
\tikzstyle directed=[postaction={decorate,decoration={markings,
    mark=at position 1 with {\arrow[arrowstyle]{latex}}}}]
\tikzstyle reverse directed=[postaction={decorate,decoration={markings,
    mark=at position .45 with {\arrowreversed[arrowstyle]{latex};}}}]    
\tikzstyle arrowstyle=[scale=1.1]
\tikzstyle directed=[postaction={decorate,decoration={markings,
    mark=at position 1 with {\arrow[arrowstyle]{latex}}}}]
\tikzstyle reverse directed=[postaction={decorate,decoration={markings,
    mark=at position .45 with {\arrowreversed[arrowstyle]{latex};}}}]
\tikzset{
  -z>/.style={
    decoration={
      show path construction,
      lineto code={
        \path (\tikzinputsegmentfirst) -- (\tikzinputsegmentlast) coordinate[pos=#1] (mid);
        \draw (\tikzinputsegmentfirst) -- (mid);
        \draw[double distance=1.5pt, arrows = {- Latex[length=0pt 2 0]}] (mid) -- (\tikzinputsegmentlast);      }
    },decorate
  }, -z>/.default=.5,
  z->/.style={
    decoration={
      show path construction,
      lineto code={
          \path (\tikzinputsegmentfirst) -- (\tikzinputsegmentlast) coordinate[pos=#1] (mid);
                \draw[double distance=1.5pt] (\tikzinputsegmentfirst) -- (mid);
                \draw[decoration={markings, mark=at position 1 with {\arrow[scale=1.2]{latex}}},postaction={decorate}] (mid) -- (\tikzinputsegmentlast);
      }
    },decorate
  }, z->/.default=.5,
  --z>/.style={
    decoration={
      show path construction,
      lineto code={
        \path (\tikzinputsegmentfirst) -- (\tikzinputsegmentlast) coordinate[pos=#1] (mid);
        \draw [dashed](\tikzinputsegmentfirst) -- (mid);
        \draw [double distance=1.5pt, dashed, arrows = {- Latex[length=0pt 2 0]}] (mid) -- (\tikzinputsegmentlast);      }
    },decorate
  }, --z>/.default=.5,
    z-->/.style={
    decoration={
      show path construction,
      lineto code={
          \path (\tikzinputsegmentfirst) -- (\tikzinputsegmentlast) coordinate[pos=#1] (mid);
                \draw[double distance=1.5pt, dashed] (\tikzinputsegmentfirst) -- (mid);
                \draw[dashed, decoration={markings, mark=at position 1 with {\arrow[scale=1.2]{latex}}},postaction={decorate}] (mid) -- (\tikzinputsegmentlast);
      }
    },decorate
  }, z-->/.default=.5,
   x->/.style={
    decoration={
      show path construction,
      lineto code={
          \path (\tikzinputsegmentfirst) -- (\tikzinputsegmentlast) coordinate[pos=#1] (mid);
                \draw[double distance=2pt] (\tikzinputsegmentfirst) -- (mid);
                \draw[decoration={markings, mark=at position 1 with {\arrow[scale=1.2]{latex}}},postaction={decorate}] (\tikzinputsegmentfirst) -- (\tikzinputsegmentlast);
      }
    },decorate
  }, x->/.default=.5,
  -x>/.style={
    decoration={
      show path construction,
      lineto code={
        \path (\tikzinputsegmentfirst) -- (\tikzinputsegmentlast) coordinate[pos=#1] (mid);
        \draw[double distance=2pt, shorten >=5pt] (mid) -- (\tikzinputsegmentlast); 
        \draw[decoration={markings, mark=at position 1 with {\arrow[scale=1.5]{latex}}},postaction={decorate}] (\tikzinputsegmentfirst) -- (\tikzinputsegmentlast);}
    },decorate
  }, -x>/.default=.5,
   x-->/.style={
    decoration={
      show path construction,
      lineto code={
          \path (\tikzinputsegmentfirst) -- (\tikzinputsegmentlast) coordinate[pos=#1] (mid);
                \draw[double distance=2pt, dashed] (\tikzinputsegmentfirst) -- (mid);
                \draw[dashed, decoration={markings, mark=at position 1 with {\arrow[scale=1.2]{latex}}},postaction={decorate}] (\tikzinputsegmentfirst) -- (\tikzinputsegmentlast);
      }
    },decorate
  }, x-->/.default=.5,
  --x>/.style={
    decoration={
      show path construction,
      lineto code={
        \path (\tikzinputsegmentfirst) -- (\tikzinputsegmentlast) coordinate[pos=#1] (mid);
        \draw[double distance=2pt, dashed, shorten >=5pt] (mid) -- (\tikzinputsegmentlast); 
        \draw[dashed, decoration={markings, mark=at position 1 with {\arrow[scale=1.4]{latex}}},postaction={decorate}] (\tikzinputsegmentfirst) -- (\tikzinputsegmentlast);}
    },decorate
  }, --x>/.default=.5,
}
\tikzset{
    partial ellipse/.style args={#1:#2:#3}{
        insert path={+ (#1:#3) arc (#1:#2:#3)}
    }
}
\newtheorem{theorem}{Theorem}[section]
\newtheorem{theorem-definition}[theorem]{Theorem-Definition}
\newtheorem{theorem-construction}[theorem]{Theorem-Construction}
\newtheorem{lemma-definition}[theorem]{Lemma--Definition}
\newtheorem{lemma-construction}[theorem]{Lemma--Construction}
\newtheorem{lemma}[theorem]{Lemma}
\newtheorem{proposition}[theorem]{Proposition}
\newtheorem{corollary}[theorem]{Corollary}
\newtheorem{conjecture}[theorem]{Conjecture}
\newtheorem{definition}[theorem]{Definition}
\newtheorem{example}[theorem]{Example}
\newcommand{\old}[1]{}
\newcommand{\rH}{{\rm H}}
\newcommand{\Z}{{\mathbb Z}}
\newcommand{\R}{{\mathbb R}}
\newcommand{\C}{{\mathbb C}}
\newcommand{\A}{{\rm A}}
\newcommand{\F}{{\rm F}}
\newcommand{\B}{{\rm B}}
\newcommand{\G}{{\rm G}}
\newcommand{\U}{{\rm U}}
\newcommand{\Q}{{\Bbb  Q}}
\newcommand{\bS}{{{\Bbb S}}}
\newcommand{\lms}{\longmapsto}
\newcommand{\lra}{\longrightarrow}
\newcommand{\hra}{\hookrightarrow}
\newcommand{\be}{\begin{equation}}
\newcommand{\ee}{\end{equation}}
\newcommand{\bt}{\begin{theorem}}
\newcommand{\et}{\end{theorem}}
\newcommand{\bd}{\begin{definition}}
\newcommand{\ed}{\end{definition}}
\newcommand{\bp}{\begin{proposition}}
\newcommand{\ep}{\end{proposition}}
\newcommand{\bl}{\begin{lemma}}
\newcommand{\el}{\end{lemma}}
\newcommand{\bc}{\begin{corollary}}
\newcommand{\ec}{\end{corollary}}
\newcommand{\bcon}{\begin{conjecture}}
\newcommand{\econ}{\end{conjecture}}
\newcommand{\la}{\label}
\begin{document}

\date{}

\title {Cluster construction of the second  motivic Chern class}
\author{Alexander B. Goncharov, Oleksii Kislinskyi}

\maketitle
\tableofcontents

\begin{abstract}

Let $\G$ be a split, simple, simply connected, algebraic group over $\Q$. The degree 4, weight 2 motivic cohomology group of the classifying space $\B\G_\bullet$ of $\G$ is identified with 
$\Z$. We construct cocycles representing the generator, known as the second universal motivic Chern class.\\

  If $\G = {\rm SL(m)}$, there is a canonical cocycle, defined by the first author (1993). For any group G, we define a collection of cocycles parametrised by cluster coordinate systems on the space of 
  $\G-$orbits on the cube of the principal affine space $\G/\U$. Cocycles for different clusters are related by explicit coboundaries, constructed using cluster transformations relating the clusters.\\
  
  The cocycle has three components. The construction of the last one is canonical and elementary; it does not use clusters, and provides the   motivic generator of  $\rH^3(\G(\C), \Z(2))$. However to lift it to the whole cocycle we need cluster coordinates:  construction of the first two components uses crucially the cluster structure of the moduli spaces ${\cal A}(\G,\bS)$ related to the moduli space of $\G-$local systems on $\bS$. 
  In retrospect, it partially explains why   cluster coordinates on the space ${\cal A}(\G,\bS)$ should exist.\\
  
  The construction has numerous applications, including  explicit constructions of the universal extension of the group $\G$ by $K_2$, the line bundle on ${\rm Bun}(\G)$
   generating its Picard group, Kac-Moody groups, etc. Another application is an explicit combinatorial construction of the second motivic Chern class of a $\G$-bundle. It is a motivic analog of the work of Gabrielov-Gelfand-Losik (1974), for any $\G$.\\
  
  We show that the cluster construction of the measurable group 3-cocycle for the group $\G(\C)$, provided by the component 
  ${\rm C}^{(1)}$ of our motivic cocycle,  
 gives rise to  its quantum deformation. 
  We conjecture that there are   quantum deformations of all measurable  cohomology classes of $\G(\C)$. 
   \end{abstract}

 \section{Summary}   Let $\G$ be a  split, simple,  simply connected, algebraic group over $\Bbb Q$. Let   $\B\G$ be the classifying space of $\G$, and    $\underline \Z_{\cal M}(2)$     the  weight two motivic cohomology complex.
 It is well known  that
   \be \la{BD1*}
 \rH^4(\B\G, \underline \Z_{\cal M}(2))  \stackrel{\tau}{=} \rH^3(\G, \underline \Z_{\cal M}(2))  \stackrel{}{=} \rH^1(\G, \underline K_2)  \stackrel{}{=}  \rH_{\rm Betti}^3(\G, \Z(2))   \stackrel{}{=}\Z.
\ee 
The isomorphism $\tau$  is the transgression in the motivic cohomology for the universal $\G-$bundle over $\B\G$. The second follows using the  Gersten resolution. 
The third isomorphism was established   by  Brylinsky-Deligne    \cite{BD}, and the last one is well known.  \\

Given a regular variety $X$, we use the complex $\Z^\bullet_{\cal M}(X;2)$,  defined via the Gersten resolution of the Bloch complex,  
\cite{G91},  see also (\ref{30}), and calculating the rational motivic cohomology of $X$:\footnote{By the very definition,  $\rH^i( \Z^\bullet_{\cal M}(X; 2))=0$ for $i\leq 0$, while  $\rH^i( X, \underline \Z^\bullet_{\cal M}( 2))=0$ is the Beilinson-Soule conjecture. Note  that $\rH^1( \Z^\bullet_{\cal M}({\rm Spec}(\F); 2))$ does not contain the subgroup of $K_3^{\rm ind}(\F) $ given, by \cite{S},  by the $\Z/2\Z$ extension of ${\rm Tor}(\F^\times, \F^\times)$. }\be
\begin{split}
&\rH^i( \Z^\bullet_{\cal M}(X; 2))  \stackrel{}{=} \rH^i(X, \underline \Z^\bullet_{\cal M}(2)), ~~~~i \geq 2.\\
&\rH^1( \Z^\bullet_{\cal M}(X; 2))\otimes \Q  \stackrel{}{=} \rH^1(X, \underline \Z^\bullet_{\cal M}(2))\otimes \Q.
\end{split}
\ee
Its definition extends immediately to the case when $X$ is a regular simplicial scheme. We use Milnor's simplicial model $\B\G_\bullet$ 
of   $\B\G$, see paragraph 10. Then (\ref{BD1*})    looks as follows:
   \be \la{BD1}
    \rH^4( \Z^\bullet_{\cal M}(\B\G_\bullet; 2))  \stackrel{\tau}{\lra} 
 \rH^3(\Z^\bullet_{\cal M}(\G; 2))  \stackrel{}{=} \rH^1(\G, \underline K_2)  \stackrel{}{=}  \rH_{\rm Betti}^3(\G, \Z(2))   \stackrel{}{=}\Z.
\ee 
Here the map $\tau$ is surjective, and his kernel is a torsion subgroup. \\

 We construct   cocycles ${\rm C}^\bullet$ representing  the  
  second universal motivic Chern class,    i.e. an element
   \be \la{C2a1}
 {\rm c}_2 \in \rH^4( \Z_{\cal M}(\B\G_\bullet;2)). 
 \ee 
 such that $\tau(c_2)=1 \in \Z$ under isomorphisms (\ref{BD1}). 
If $\G = {\rm SL}_m$, there is a canonical  cocycle,  defined  in \cite{G93}.   
Given a  representation $V$ of   $\G$, it  induces a cocycle for   $\B\G$.  
Yet this way we can get  only  multiples of ${\rm c}_2$,  e.g.  $60\cdot {\rm c}_2$ for  ${\rm E_8}$. \\

 For any group $\G$, we define a collection of  cocycles ${\rm C}^\bullet$, parametrised  by   cluster coordinate systems on the 
   space of   $\G-$orbits on the cube   of the principal affine space $\G/\U$. Cocycles 
 for different clusters are related by explicit coboundaries,  constructed using    cluster transformations relating the   clusters.

A cocycle ${\rm C}^\bullet$ has three components: ${\rm C}^{(1)}, {\rm C}^{(2)}, {\rm C}^{(3)}$.  
The construction of the  component ${\rm C}^{(3)}$  is canonical and elementary;  it does not use clusters, and provides 
  a  canonical cocycle for the generator of $\rH_{\rm Betti}^3(\G, \Z(2))$.      
 However   to lift   ${\rm C}^{(3)}$ to a cocycle ${\rm C}^{\bullet}$   we need   cluster coordinates:   
 the construction of the first two components uses crucially   the    cluster  structure of the   moduli spaces 
  $ {\cal A}_{\G, \bS} $,  related to the  $\G-$character varieties for   decorated surfaces  $\bS$ \cite{GS19}.   

In retrospect, it partially explains    why the cluster coordinates on the   space  ${\cal A}_{\G, \bS}$  should exist. 
 \\
    
This construction has numerous  applications, including an  explicit construction   of the universal extension of the group $\G$ by $K_2$, the 
determinant line bundle on ${\rm Bun}_\G$,  Kac-Moody groups, etc.

Another application is  an explicit combinatorial construction of the second motivic Chern class of a $\G-$bundle. It   
 is a motivic analog of the work of Gabrielov-Gelfand-Losik \cite{GGL}, for any  $\G$. \\
 
The cluster construction of the second motivic Chern class  also provides its quantum deformation.  In Section \ref{S9}
 we explain a part of the story: a quantum deformation of the  third cohomology class of the measurable cohomology of the Lie group 
 $\G_\C:=\G(\C)$: 
 $$
 \beta_3 \in {\rm H}^3_{\rm meas}(\G_\C, \R).
 $$

   \section{Introduction and   main results}

\paragraph{1. The group $\rH^3(\G, \Z)$.} In this paper  $\G$ is a  split, simple,  simply connected algebraic group over $\Bbb Q$.   
 Its Lie algebra  $\mathfrak{g}$  is a Lie algebra over $\Q$. 
The de Rham cohomology group   $\rH_{\rm DR}^3(\G;\Q)$ is   
 identified with  invariant bilinear symmetric forms $\langle \ast, \ast \rangle$ on $\mathfrak{g}$:
\be \la{ISO}
\rH_{\rm DR}^3(\G; \Q) = S^2(\mathfrak{g}^*)^\G \stackrel{\sim}{=} \Q.
\ee
Namely,    a form  $\langle \ast, \ast \rangle \in S^2(\mathfrak{g}^*)^\G$  gives rise to the  ${\rm Ad}_\G-$invariant  $3-$form on $\mathfrak{g}$:
  \be
\begin{split}
&\varphi_{\langle \ast, \ast \rangle} \in \Lambda^3(\mathfrak{g}^*)^\G, ~~~~~~
 \varphi_{\langle \ast, \ast \rangle}(A,B,C):=  \langle A, [B,C] \rangle.\\
\end{split}
 \ee
It determines a closed  biinvariant differential $3-$form   on $\G$,   providing  isomorphism (\ref{ISO}).  For example, for $\G={\rm SL}_m$ we get   rational multiples of   
 the  form  ${\rm Tr}(g^{-1}dg)^3$.  
  
 Let  $\mathfrak{h}$ be the Lie algebra of the Cartan group $\rm H$ of $\G$, and $W$   the Weyl group of $\G$. Then 
\be
S^2(\mathfrak{g}^*)^\G = S^2(\mathfrak{h}^*)^W.
\ee
It is known that the  canonical generator of   $\rH_{\rm DR}^3(\G; \Z)$ is provided by the Killing form normalized so that its value on the shortest   coroot is equal to $1$. 
We call it the DeRham generator. 

  Denote by $\rH^*_\B$ the singular (Betti) cohomology of a topological space.  
  The integration provides an isomorphism between the DeRham and Betti cohomology, and  identifies the   generators: 
\be \la{int}
\int: \rH_{\rm DR}^3(\G; \Z)  \stackrel{\sim}{\lra} \rH_\B^3(\G(\C); \Z(2)), \ \ \ \  \Z(2) := (2\pi i)^2 \Z.
\ee

  Denote by $\B\G$ the classifying space for the algebraic group $\G$.  
  It is well known that  
  \be
  \rH^4(\B\G,   \Z(2)) = \rH^3(\G, \Z(2)).
  \ee
To introduce the motivic upgrade of this isomorphism, 
we recall   the weight two motivic complex. 

\paragraph{2. The  $K_2-$cohomology.}  Given a field $\F$,  the Milnor $K_2-$group of $\F$  is the abelian group  given by  the quotient of the wedge square $\Lambda^2\F^\times$ of the multiplicative group $\F^\times $  by the subgroup generated by the Steinberg relations $(1-x)\wedge x$, where $x \in \F^\times -\{1\}$:  
\be
K_2(\F):= \Lambda^2\F^\times /\langle (1-x)\wedge x\rangle.
\ee
Let $X$ be a regular algebraic variety over a field $k$, with the field of   functions $k(X)$. Denote by 
 $X_d$ the set of   irreducible subvarieties of codimenion $d$ on $X$. Then there is a complex  of abelian groups: 
 \be \la{RESSS}
\underline {K^\bullet_2}:=    K_2( k(X)) \stackrel{\rm res}{\lra} \bigoplus_{D\in X_1} k(D)^\times \stackrel{\rm val}{\lra} \bigoplus_{X_2}\Z.
 \ee
We place it    in the degrees $[0,2]$. The right map is the valuation map. The left map is the tame symbol:
\be \la{RESS}
{\rm res}: f\wedge g \lms \sum_{D \in X_1}(-1)^{{\rm val}_D(f){\rm val}_D(g)}\frac{f^{{\rm val}_D(g)}}{g^{{\rm val}_D(f)}}{|D}.
\ee

We denote    its cohomology by 
$ \rH^\ast(X,   \underline {K_2})$. 
 
\paragraph{3. The Hodge regulator map.} For a   regular complex algebraic variety $X$, the group $\rH^1(X, \underline K_2)$ provides 
 {\it some} elements of $\rH^3(X(\C); \Z(2))$ of the Hodge type $(2,2)$, defined  as currents of algebraic-geometric origin as follows. 
Given a divisor $D\subset X$ and a rational function $f$ on $D$, there is a $3-$current $\psi_{D, f}$ on $X(\C)$ 
whose value on a  smooth differential form $\omega$   is   
 \be
\psi_{D, f}(\omega):=   2\pi i \cdot\int_{D(\C)} d\log (f) \wedge \omega. 
 \ee
Its differential  is the  $\delta-$current, given by the integration along the codimension two cycle on $X$ provided by the divisor ${\rm div}(f)$ of   $f$: 
\be
d\psi_{D, f} = (2\pi i)^2\delta_{{\rm div}(f)}.
\ee
The   cycles in the complex calculating $\rH^1(X, \underline K_2)$ are given by   linear combinations 
 \be \la{CYC}
 \sum_i  (D_i, f_i), ~~~~\sum_i   {\rm div}(f_i)=0.
 \ee
Here $D_i$ is an irreducible divisor  in $X$, and $f_i$  a rational function on $D_i$. The  cocycle condition   implies  that 
 the $3-$current $\sum_i\psi_{D_i, f_i}$ is closed, defining an element of $\rH^3(X(\C), \Z(2))$ of the Hodge type $(2,2)$. Denote the subgroup of such classes as $\rH_{2,2}^3(X(\C), \Z(2))$.
 So we get the {\it Hodge regulator map}
 \be
  {\rm reg}_{\cal H}: \rH^1(X;  \underline  K_2)  \stackrel{ }{\lra}  \rH_{2,2}^3(X(\C); \Z(2)).
    \ee
 Beilinson's generalized Hodge conjecture \cite{Be} predicts  that it is  an isomorphism modulo torsion. This generalises  the   Hodge conjecture isomorphism for the codimension two cycles:
  \be
  \rH^2(X;  \underline  K_2)\otimes \Q = {\rm CH}^2(X) \otimes \Q  \stackrel{\sim}{\lra} \rH_{2,2}^4(X(\C); \Q(2)).
    \ee
Our next goal is  an explicit description of the group $\rH^3(\G(\C), \Z(2))$ via the Hodge regulator map. 

 \paragraph{4. The generator of the group $\rH^1(\G, \underline{K}_2) = \Z$.}  Denote by ${\rm I}$ the set of   vertices of the Dynkin diagram for the group $\G$. Let ${\rm C}_{ij}$,  $i,j\in {\rm I}$,  be the Cartan matrix. Recall the  Bruhat decomposition   
 of   $\G$:
\be
G = \coprod_{w\in W}  {\cal B}_w, ~~~~ {\cal B}_w:= \U  \rH\overline w \U. 
\ee
Here $\overline w$ is the canonical lift of a Weyl group element $w$ to $\G$. Therefore, given a Weyl group element $w\in W$  and a character $\chi$ of the Cartan group $\rH$, we get    a regular function $\chi_w$ on the Bruhat cell ${\cal B}_w$:
\be \la{xw}
\chi_w \in {\cal O}^\times( {\cal B}_w), ~~~~~~ \chi_w(u_1 h \overline w  u_2):= \chi(h).
\ee
The dominant weight $\Delta_k$ gives rise to a regular  function on the Bruhat cell $ {\cal B}_w$, denoted by $\Delta_{k, w}$.

 Recall the longest element $w_0$ of $W$. The Bruhat divisor $ {\cal B}_{s_kw_0}$ is determined by the equation $\Delta_{k, w_0}=0$.    Let us introduce the following rational function 
 on this divisor. Denote by $i_k: {\cal B}_{s_kw_0} \subset \G$ the natural embedding. Set
\be
F_k:= i_k^*\Bigl(\Delta^{-1}_{k, s_k \omega_0}\prod_{i \in {\rm I} \setminus \{k\}}    (\Delta_{i, w_0})^{\frac{C_{ki}}{2}}\Bigr)^{d_k}.
\ee 
Here the integers $\{d_i\}$ are the symmetrizers: $d_i{\rm C}_{ij} = d_j{\rm C}_{ji}$. 
Let us consider the following formal sum of the pairs (a Bruhat divisor, a rational function on it):
\be
{\rm C}^{(3)}= \bigoplus_{k \in {\rm I}} \Bigl( {\cal B}_{s_kw_0}, F_k \Bigr).
\ee
\bt \la{TH1}
The element ${\rm C}^{(3)}$ is a 1-cocycle in the complex $\underline{K}^\bullet_2\otimes \Z[\frac{1}{2}]$. 
Its cohomology class $[{\rm C}^{(3)}]$     generates   the group $\rH^1(\G, \underline{K}_2) = \Z$. Its Hodge realization ${\rm reg}_{\cal H}[{\rm C}^{(3)}]$ generates 
the group $\rH^3(\G(\C), \Z(2))$.
\et

 \paragraph{5. An example:  $\rH^3({\rm SL_2}(\C))$.}  
 There are three    ways to describe this  group:

 \begin{enumerate}
 
\item   {\it Betti}.  One has $\rH^3_\B(SL_2(\C); \Z)  =\Z$ since $SU(2) = S^3$ is a  retract of $SL_2(\C)$.  
 
\item  {\it De Rham}.  The generator of  $H_{\rm DR}^3({\rm SL_2}; \Z)$ is given by the form ${\rm Tr}(g^{-1}dg)^3$ on ${\rm SL_2}$.  
The coefficient  $\Z(2)$ in the comparison isomorphism (\ref{int}) reflects  the volume formula ${\rm vol}(S^3) = 2 \pi^2$.  

\item  {\it Motivic}.    A line $L$ in a 2-dimensional vector space $V$  provides  a divisor $\B_L$ with a function $f$: 
$$
\B_L  := \{g \in SL_2 ~|~ gL =L\}, ~~~~~~gl = f(g)l, ~~~\forall g \in \B_L, ~~l \in L.
$$  The 3-current $\psi_{\B_L, f}$  generates $\rH^3_\B(SL_2(\C); \Z(2))$. 
 
\end{enumerate}

Theorem \ref{TH1} is proved in Section \ref{S2}. 
The group $\rH^1(\G, \underline{K}_2)$ was described   by Brylinsky-Deligne \cite{BD}.  
Theorem \ref{TH1} provides a   specific cocycle for the   generator of $ \rH^1(\G, \underline{K}_2) $.    
Such a cocycle, of course,    is   not unique. 
Our   cocycle is tied up with  the cluster structure of the space ${\cal A}_{\G, \bS}$.   Let us elaborate on this.

\paragraph{6. The key feature of the cocycle ${\rm C}^{(3)}$.} We identify  $ \rH^1(\G, \underline{K}_2) $ with the $\G-$invariants  
$ \rH^1(\G\times \G, \underline{K}_2)^\G $, for the left diagonal action of $\G$.  There are three  projections 
\be \la{PP}
p_{ij}:\G^3 \lra \G^2, ~~~~1 \leq i < j \leq 3,~~~~p_{ij}(g_1, g_2, g_3) := (g_i, g_j). 
\ee
We claim that 
\be
p_{12}^*[{\rm C}^{(3)}]  + p_{23}^*[{\rm C}^{(3)}]  - p_{13}^*[{\rm C}^{(3)}] =0.
 \ee
 Our goal is to prove this on the level of complexes,  constructing explicitly a $\G-$invariant  element of $K_2(\G^3)$ whose residue is the cocycle representing the cohomology class on the 
 left. This boils down to a construction of a certain 
 $\G-$invariant element ${\rm C}^{(2)}$ in 
 $\Q(\G^3)^* \wedge  \Q(\G^3)^*$. 
 
 \paragraph{7. The element ${\rm C}^{(2)}$.} Observe that $ 
 \U\backslash \G/\U = \G\backslash (\G/\U \times \G/U)$. By the construction, the  cocycle ${\rm C}^{(3)}$ is invariant under  the right action of the group $\U \times \U$ on $\G\times \G$.  
  Note that  projections (\ref{PP}) determine   three similar canonical projections involving ${\cal A}:= \G/\U$ which are denoted, abusing notation, by
\be
p_{ij}: {\cal A}^3 \lra {\cal A}^2 ~~~~ 1 \leq i < j \leq j.
\ee

So we are looking for an  element   
 \be \la{CCA}
 \begin{split}
 &{\rm C}^{(2)}\in  \Q({\cal A}^3)^* \wedge  \Q({\cal A}^3)^*\\
 &{\rm res}({\rm C}^{(2)} )= p_{12}^*{\rm C}^{(3)}  + p_{23}^*{\rm C}^{(3)}  - p_{13}^*{\rm C}^{(3)}.\\
 \end{split}
  \ee
 Explicitely, we can write 
\be\begin{split}
&{\rm C}^{(2)}= \sum_{i,j} \widetilde \varepsilon_{ij} \cdot \A_i\wedge A_j, ~~~~\A_i \in  \Q( {\cal A}^3)^\G ~~~~ \widetilde\varepsilon_{ij} = -  \widetilde\varepsilon_{ji}\in \Z.\\
\end{split}
\ee
Here $\{A_j\}$ is a collection of   $\G-$invariant regular functions on ${\cal A}^3$. So to construct ${\rm C}^{(2)}$ we must   exibit a collection of such functions. 
This is  exactly what the cluster structure on the space ${\rm Conf}_3({\cal A}):=\G\backslash {\cal A}^3$ does:  the functions $\{A_j\}$   are the cluster coordinates, and 
$ \widetilde\varepsilon_{ij}$ is the skew-symmetrized exchange matrix.

 The element ${\rm C}^{(2)}$ is defined in the end of Section \ref{SECt4}, where we recall the construction of a cluster for the space ${\rm Conf}_3({\cal A})$.  
 Different cluster coordinate systems deliver   elements ${\rm C}^{(2)}$ which differ  by explicitly given    sums of Steinberg relations,  
and therefore define the same class in $K_2$. 

Note   that the cluster structure does more: it delivers   elements 
 where the number of functions $A_i$   equals to the dimension of  
${\rm Conf}_3({\cal A})$, and these functions are regular coordinates on this space. 

On the other hand, this partially explains why the cluster coordinates on   ${\rm Conf}_3({\cal A})$ should exist: we know  that an element (\ref{CCA}) must exist.

\paragraph{8. Remark.}  A similar problem for the deRham cocycle is much easier, and has a canonical solution:
\be
3\cdot d {\rm Tr} \Bigl(g_1g_2 dg^{-1}_2dg_1^{-1} \Bigl) = {\rm Tr}(g_1^{-1}dg_1)^3 + {\rm Tr}(g_2^{-1}dg_2)^3 - {\rm Tr}\Bigl((g_1g_2)^{-1}d(g_1g_2)\Bigl)^3.
  \ee 
  To explain the general problem, and  how the   elements ${\rm C}^{(2)}, {\rm C}^{(3)}$  fit in the motivic framework, we   
recall   two basic ingredients of the construction: the weight two motivic complex, and   Milnor's    model for  ${\rm B}\G$.  

\paragraph{9. The weight two motivic complex.}

Recall the  cross-ratio  of four points on ${\Bbb P}^1(\F)$: 
\be \la{CR1}
r(s_1, s_2, s_3, s_4) := \frac{(s_1- s_2) (s_3- s_4) }{(s_1- s_4) (s_2- s_3) }, ~~~~r(\infty, -1, 0, z) = z.
\ee
Given any five distinct points $s_1, ..., s_5$ on $ {\Bbb P}^1(\F)$, consider the element 
\be \la{AFED}
\sum_{i=1}^5 \{-r(s_i, s_{i+1}, s_{i+2},  s_{i+3})\} \in \Z[\F], ~~~~i \in \Z/5\Z.
\ee
Denote by ${\rm R}_2(\F)$ the   subgroup of $\Z[\F^*-\{1\}]$ generated by   elements (\ref{AFED}) for all  5-tuples of distinct points. 
The Bloch group $\B_2(\F)$ is  the quotient
\be
\B_2(\F):= \frac{\Z[\F^*-\{1\}]}{{\rm R}_2(\F)}.
\ee
The key point is that there  is a well defined map 
\be \la{BLOCH}
\begin{split}
 \delta: \ &\B_2(\F) \lra  \F^*\wedge \F^*.\\
&\{x\}\lms (1-x) \wedge x.\\
\end{split}
\ee
This complex, placed in the degrees $[1,2]$, is called the {\it Bloch complex}. Note that ${\rm Coker}(\delta) = K_2(\F)$. 

Let $X$ be a regular algebraic variety over a field $k$. Then there is a complex  of abelian groups placed in the degrees $[1,4]$, and called the {\it weight two motivic complex} of $X$: 
 \be \la{30}
\Z^\bullet_{\cal M}(X; 2):=  \B_2(k(X)) \stackrel{\delta}{\lra}  k(X)^*\wedge k(X)^* \stackrel{\rm res}{\lra} \bigoplus_{D\in X_1} k(D)^\times \stackrel{\rm val}{\lra} \bigoplus_{X_2}\Z.
 \ee
We use the notation
\be
 \rH^\ast(X,   \underline \Z_{\cal M}(2)):=  \rH^*(\Z^\bullet_{\cal M}(X; 2)).
 \ee
Recall that 
  \be \la{II}
  \rH^4(\B\G,   \underline \Q_{\cal M}(2)) = \rH^3(\G, \underline \Q_{\cal M}(2))  = S^2(\mathfrak{h}^*)^W = \Q.
  \ee

  \bd The  second universal motivic Chern class
 \be \la{C2}
 {\rm c}_2 \in \rH^4(\B\G, \underline \Z_{\cal M}(2)) 
 \ee
  is the integral generator  which corresponds, under   isomorphisms (\ref{II}), to the Killing  
  form on $\mathfrak{g}$ normalized so that its values on the shortest coroot is equal to $1$. 
 \ed

\paragraph{10. Milnor's simplicial model ${\rm B}\G_\bullet$ of the classifying space ${\rm B}\G$.}  Recall the simplicial realization ${\rm E}\G_\bullet$ of the space ${\rm E}\G$: \\
 
 \begin{tikzpicture}
\path  
 node (a) at (4,0) {$\cdots$}
  node (b) at (5,0) {$G^3$}
  node (c) at (8,0) {$G^2$}
  node (d) at (11,0) {$G$};

\begin{scope}[->,>=latex]

    \foreach \i in {0,...,2}{%
      \draw[->] ([yshift=(\i - 1) * 0.6 cm]b.east) -- ([yshift= (\i - 1) * 0.6 cm]c.west) ;}
    \foreach \i in {0,...,1}{%
      \draw[<-] ([yshift=(2 * \i -1) * 0.3 cm]b.east) -- ([yshift=(2 * \i -1) * 0.3 cm]c.west) ;}

    \foreach \i in {0,...,1}{%
      \draw[->] ([yshift=(2 *\i - 1) * 0.3 cm]c.east) -- ([yshift=(2 *\i - 1) * 0.3 cm]d.west) ;}
    \foreach \i in {0,...,0}{%
      \draw[<-] ([yshift=\i * 0.3 cm]c.east) -- ([yshift=\i * 0.3 cm]d.west) ;}
\end{scope}

\end{tikzpicture} \\
 In particular, there are the $n+1$ standard maps  
\be
s_{n, i}: \G^{n+1} \lra \G^n, ~(g_0, ..., g_n) \lms (g_0, ..., \widehat g_i, ..., g_n), ~~i=0, ..., n.
\ee
Then we set 
 ${\rm B}\G_\bullet:= \G\backslash {\rm E}\G_\bullet$:\\
 
  \begin{tikzpicture}
\path  
 node (a) at (4,0) {$\cdots$}
  node (b) at (5,0) {$G^2$}
  node (c) at (8,0) {$G$}
  node (d) at (11,0) {$\ast$};

\begin{scope}[->,>=latex]

    \foreach \i in {0,...,2}{%
      \draw[->] ([yshift=(\i - 1) * 0.6 cm]b.east) -- ([yshift= (\i - 1) * 0.6 cm]c.west) ;}
    \foreach \i in {0,...,1}{%
      \draw[<-] ([yshift=(2 * \i -1) * 0.3 cm]b.east) -- ([yshift=(2 * \i -1) * 0.3 cm]c.west) ;}

    \foreach \i in {0,...,1}{%
      \draw[->] ([yshift=(2 *\i - 1) * 0.3 cm]c.east) -- ([yshift=(2 *\i - 1) * 0.3 cm]d.west) ;}
    \foreach \i in {0,...,0}{%
      \draw[<-] ([yshift=\i * 0.3 cm]c.east) -- ([yshift=\i * 0.3 cm]d.west) ;}
\end{scope}

\end{tikzpicture} \\

Let   $X \lms {\cal F}^\bullet(X)$ be an assignment to an algebraic variety $X$ a  complex of abelian groups ${\cal F}^\bullet(X)$, contravariant under surjective maps $X \to Y$. 
We define the complex   ${\cal F}^\bullet({\rm E}\G_\bullet)$  as the total complex associated with the  bicomplex
\be
  \begin{split}
  &  \cdots \stackrel{s^*}{\longleftarrow}  \mathcal{F}^\bullet({\G^4}) \stackrel{s^*}{\longleftarrow}    \mathcal{F}^\bullet({\G^3})  \stackrel{s^*}{\longleftarrow}  
  \mathcal{F}^\bullet({\G^2})\stackrel{s^*}{\longleftarrow}  \mathcal{F}^\bullet(\G)\stackrel{s^*}{\longleftarrow}  \mathcal{F}^\bullet(\ast).\\
&s^*: = \sum_{i=0}^n (-1)^is_{n, i}^*: \mathcal{F}^\bullet({\G^{n}}) \lra \mathcal{F}^\bullet({\G^{n+1}}).\\
\end{split}
 \ee
 Applying this construction to the weight two motivic complex $\Z^\bullet_{\cal M}(\ast; 2)$,  and taking the $\G-$invariants, we get the   complex  
$$
 \Z_{\cal M}(\B\G_\bullet; 2)  :=  \Z_{\cal M}({\rm E}\G_\bullet; 2)^\G.
$$

Let  ${\rm N}$ be a maximal unipotent subgroup. Recall the principal affine space ${\cal A}:=\G /{\rm N}$.  

The canonical   projection $\G^n\lra {\cal A}^n$ induces a map of complexes, denoted $\varphi_{{\cal A} \to \G}$:  
\begin{displaymath}  
    \xymatrix{
 \Bigl( \ldots&    \ar[l]_{s^*~~~~~}   \ar[d]  {\Z}_{\cal M}^\bullet({ {\cal A}^4 ; 2})    &    \ar[d]  \ar[l]_{~~s^*}     {\Z}_{\cal M}^\bullet({  {\cal A}^3 ; 2})   &   \ar[d]  \ar[l]_{~~s^*}   {\Z}_{\cal M}^\bullet({  {\cal A}^2 }; 2)  &   \ar[d]  \ar[l]_{~~s^*} {\Z}_{\cal M}^\bullet({ {\cal A} }; 2)  \Bigr)^\G \\
  \Bigl(\ldots& \ar[l]_{s^*~~~~~}     {\Z}_{\cal M}^\bullet({ G^4 }; 2)    & \ar[l]_{~~s^*}       {\Z}_{\cal M}^\bullet({  \G^3 }; 2)   &     \ar[l]_{~~s^*}   
 {\Z}_{\cal M}^\bullet({  \G^2 }; 2)  &   \ar[l]_{~~s^*}   
   {\Z}_{\cal M}^\bullet({ \G}; 2) \Bigr)^\G \\}
         \end{displaymath} 
 
We define a degree $4$ cycle in the total complex associated with the    bicomplex illustarted on the diagram. 
 \begin{figure}[!h]
\hspace{-0cm}
\begin{tikzpicture}
\centering
  \matrix (m) [matrix of math nodes,row sep=2.5em,column sep=2em,minimum width=1em]
  {
    & \ldots  & \ldots &
     \bigoplus\limits_{D\in {\rm X}_2({\rm Conf}_2({\cal A}) )}\mathbb{Z}   & \\     
      & \ldots  & \bigoplus\limits_{D\in {\rm X}_1({\rm Conf}_3({\cal A}))}\mathbb{Q}(D)^\times &
     \bigoplus\limits_{D\in {\rm X}_1({\rm Conf}_2({\cal A}))}\mathbb{Q}(D)^\times   &  \\     
      &\bigwedge^2\mathbb{Q}({\rm Conf}_4({\cal A}))^\times      & \bigwedge^2\mathbb{Q}({\rm Conf}_3({\cal A}))^\times  &
     \bigwedge^2\mathbb{Q}({\rm Conf}_2({\cal A}))^\times &  \\
    & B_2\Bigl(\mathbb{Q}({\rm Conf}_4({\cal A}))\Bigr)       & B_2\Bigl(\Q({\rm Conf}_3({\cal A}))\Bigr)  & 
     B_2\Bigl(\mathbb{Q}({\rm Conf}_2({\cal A}))\Bigr) &  \\     
};
  \path[-stealth]     
\foreach \row in { 1,2,3,4} {
    \foreach \col/\colnext in { 2/3,3/4}{
        (m-\row-\colnext) edge node [above] {$s^*$} (m-\row-\col)
    } 
}
\foreach \col in { 2,3,4}{
        (m-2-\col) edge node [left] {${\rm div}$} (m-1-\col)
        (m-3-\col) edge node [left] {${\rm res}$}(m-2-\col)
     (m-4-\col) edge node [left] {$\delta$}(m-3-\col)        
    };
\draw[rotate=19, color = red] (0,-0.9) circle [x radius=7.0cm, y radius=0.9cm];
\end{tikzpicture}
\end{figure}
It is given by    the    encircled in the bicomplex degree $4$ cocycle ${\rm C}^\bullet = ( {\rm C}^{(1)},  {\rm C}^{(2)},  {\rm C}^{(3)})$:
 \be
\begin{split}
& {\rm C}^{(1)} \in \B_2\Bigr({\Bbb Q}({\rm Conf}_4({\cal A}))\Bigl),~~~~~ 
  {\rm C}^{(2)} \in \bigwedge^2  {\Bbb Q}({\rm Conf}_3({\cal A}))^\times, ~~~~~ 
  {\rm C}^{(3)} \in \bigoplus\limits_{D\in {\rm X}_1({\rm Conf}_2({\cal A}) ) }{\cal O}^\times_D.~~~~~~~~~~~\\
\end{split}
  \ee
 The  cocycle property     just means that
  \be \la{COSC}
 s^*({\rm C}^{(1)})=0, ~~~~ \delta({\rm C}^{(1)}) = s^*({\rm C}^{(2)}),
~~~~ {\rm res}({\rm C}^{(2)}) = s^*({\rm C}^{(3)}), ~~~~{\rm div}  ({\rm C}^{(3)})=0. 
\ee
    The cocycle will be well defined up to a coboundary. It provides a cocycle $\varphi_{{\cal A} \to \G}({\rm C}^\bullet )$.

\bt \la{Th1.3} There is  a cocycle  ${\rm C}^\bullet =  ( {\rm C}^{(1)},  {\rm C}^{(2)},  {\rm C}^{(3)}) $ such that the induced cocycle $\varphi_{{\cal A} \to \G}({\rm C}^\bullet )$ represents 
 the   second  motivic Chern class
 \be \la{C2}
 {\rm c}_2 \in \rH^4(\B\G_\bullet, \underline \Z_{\cal M}(2)).
 \ee
 \et

   If $\G = {\rm SL}_m$, there is a   canonical cocycle ${\rm C}^\bullet$,  defined in \cite{G93}.  Given a non-trivial representation $V$ of the  group $\G$, the 
  pull back of this cocycle via the embedding $\G \hra {\rm SL(V)}$ is a non-trivial cocycle for $\G$. However  in general we can not get the  generator of the group $\rH^4$ this way. 
  For example, for the group of type $E_8$, the closest we get this way is $60\cdot {\rm c}_2$ for the adjoint representation.

  \paragraph{11. Cluster nature of the construction.} Our construction is cluster.  The construction of the components ${\rm C}^{(1)},  {\rm C}^{(2)}$   uses essentially  
   the construction of the cluster   structure on the   moduli space 
 ${\cal A}_{\G, \bS}$ \cite{FG1},  closely related to the moduli space of  $\G-$local systems on   a   decorated surface $\bS$, in the case when $\bS$ is a triangle or a quadrilateral. 
 For $\G = {\rm SL}_m$ this  is explained in \cite[Section 12]{FG1}. 
 
  On the other hand, the construction of the cluster structure for the general moduli space ${\cal A}_{\G, \bS} $ follows immediately from the one for a triangle and rectangle, 
  provided that we prove that these cluster structures are invariant under the twisted cyclic rotations of these polygons. The latter is the most challenging part of the proof in \cite{GS19}, 
  which takes about 30 pages of elaborate calculations, with the final result coming as a pleasant surprise. 
  Our approach  explains why the cluster structure should be invariant under the twisted cyclic  shift, and establishes a key step of the proof without any elaborate computations. 
   
 The last component ${\rm C}^{(3)}$ is crucial to prove that   the  cohomology class  $[{\rm C}^\bullet]$ coincides with the motivic Chern class ${\rm c}_2$.

\paragraph{12. Applications.} This construction has numerous   applications. Here are some of them. 
  
  \begin{enumerate}
  
  \item 
  
  An explicit construction on the level of cocycles of the universal extension of the group $\G$ by $K_2$. 

Thus we get an explicit construction of the Kac-Moody group $\widehat \G$ given by a central extension of the loop group:
\be
1 \lra {\Bbb G}_m \lra \widehat \G \lra \G((t)) \lra 1.
\ee

\item We  get  an explicit construction of the  line bundle   
    generating   the Picard group of ${\rm Bun}_\G(\Sigma)$, where $\Sigma$ is a Riemann surface with punctures.     See  \cite{LS} for the background  
    on the generating line bundle.

\item Using the dilogarithm and the weight two exponential complex \cite{G15}, we get an explicit combinatorial formula
 for the second Chern class of a $\G-$bundle on a manifold, with values  in the Beilinson-Deligne complex.  
 In particular we get a combinatorial formula for the  second integral  Chern class, in the spirit of the  Gabrielov-Gelfand-Losik  combinatorial formula  \cite{GGL} for the first Pontryagin class.

\item Given a  punctured surface $S$, let  ${\cal U}_{\G, S}$ be the moduli space parametrizing   {\it framed} unipotent $\G-$local systems on $S$, that 
is  $\G-$local systems  with unipotent monodromies around the punctures,  equipped with a reduction to the Borel subgroup at each puncture.

Let ${\rm M}$ be a threefold whose boundary  is the surface $\overline S$  \underline{with} filled punctures. We prove that the subspace ${\cal M}_{\G, M}\subset {\cal U}_{\G, S}$  parametrising  framed unipotent 
$\G-$local systems on $S$ which can be extended to  
 ${\rm M}$ is a $K_2-$Lagrangian. 
We define the {\it motivic volume} map on its generic part
 \be
{\rm Vol}_{\rm mot}:  {\cal M}^\circ_{\G, {\rm M}} \lra \B_2(\C) 
  \ee
 valued  in the Bloch group of $\C$. Its composition with the map  $\B_2(\C)\to \R$  provided by the Bloch-Wigner dilogarithm  is a volume map  generalising the volume of a hyperbolic threefold. 
  For $\G={\rm GL_m}$ these results were obtained in \cite{DGG}  using the canonical cocycle for ${\rm GL}_m$.
  \end{enumerate}
 
  \paragraph{13. Quantum deformation of measurable cocycles of $\G(\C)$.} The measurable cohomology 
  ${\rm H}^{*}_{\rm meas}(\G, \R)$ of a Lie group $\G$ are   the cohomology of the complex of  $\G-$invariants of  measurable functions on ${\rm Meas}(\G^n$):
\be
\begin{split}
  \ldots \lra  & {\rm Meas}(\G^{n-1})^\G  \lra   {\rm Meas}(\G^n)^\G  \lra    {\rm Meas}(\G^{n+1})^\G  \lra \ldots \\
&df(g_1, ..., g_n):= \sum_{i=1}^n(-1)^if(g_1, ..., \widehat g_i, ..., g_n).\\
\end{split}
\ee
  
Let $\G_\C=\G(\C)$. The algebra ${\rm H}^{*}_{\rm meas}(\G_\C, \R)$ is a graded commutative algebra generated by the classes 
 \be \la{cohcl}
 b_{\G, 2d_m-1} \in {\rm H}^{2d_m-1}_{\rm meas}(\G_\C, \R).
\ee
 where $\{d_m\}$ are the exponents of $\G$, that is  degrees of the generators 
of the   ring ${\rm S}^*(\mathfrak{h})^W$.   
One   has $d_1=2$. 
 So  $b_{\G, 3}   = b_3$. 
 For example, when $\G$ is of type 
 ${\rm A}_r$, we have $(d_1, ..., d_r) = (2, 3, ..., r)$. \\
 
The   space ${\rm Conf}_n({\G/\B})$ of $\G-$orbits on $(\G/\B)^n$ has a cyclically invariant cluster Poisson structure \cite{GS19}. Therefore it gives rise to an algebra of q-deformed functions ${\cal O}_q({\rm Conf}_n({\G/\B}))$. The  maps 
 $$
 s_i: {\rm Conf}_{n+1}(\G/\B) \lra {\rm Conf}_{n}(\G/\B), ~~~~(\B_1, ..., B_{n+1}) \lms (\B_1, ..., \widehat \B_i, ... , B_{n+1}) \ $$
 are cluster Poisson, and  
 give rise to the maps of algebras
 $$
 s_i^*: {\cal O}_q({\rm Conf}_n(\G/\B)) \lra  {\cal O}_q({\rm Conf}_{n+1}({\G/\B})).
 $$
  
 Similarly, the space   $\G\backslash \G^n$ has a cyclically invariant cluster Poisson structure \cite{GS19}, and thus gives rise to an algebra of q-deformed functions ${\cal O}_q(\G\backslash \G^n)$. There is a  natural map
$$
\pi^*: {\cal O}_q({\rm Conf}_n({\G/\B})) \lra {\cal O}_q(\G\backslash \G^n).
$$

\bcon \la{QCCON}There exist   elements 
\be \la{BCLAS}
{\bf B}_{\G, 2d_m-1} \in {\cal O}_q({\rm  Conf}_{2d_j}({\G/\B}))
\ee
such that:

\begin{itemize}

\item They satisfy a multiplicative $(2d_m+1)-$term cocycle relation in the following form: 
$$
\prod_{j=1}^{2d_m+1}s_{2j+1}^*({\bf B}_{\G, 2d_m-1}) = 1\qquad  j \in \Z/(2d_m+1)\Z.
$$

\item The  quantum deformations of  the cohomology classes (\ref{cohcl})  are    the pull backs of the classes (\ref{BCLAS}):
 $$
 {\cal B}_{2d_m-1}:= \pi^* {\bf B}_{\G, 2d_m-1}.
  $$
  \end{itemize}
  
 \econ
 
  \bt \la{QCTH} There is the element ${\bf B}_{\G, 3}$ providing the quantum deformation of the class $
 b_3 \in  {\rm H}^{3}_{\rm meas}(\G, \R)$.
  \et
  
 Theorem (\ref{QCTH})  is proved in    Section \ref{S9}. \\

 The simplest unknown quantum cohomology class 
    is the quantum deformation of the class 
 $$
 b_5 \in  {\rm H}^{5}_{\rm meas}({\rm PGL}_3(\C), \R). 
 $$
 This class  was defined in \cite{G91} by the following function on configurations of $6$ points $(x_1, ..., x_6)$ in $\C{\rm P}^2$: 
 \be \la{triple}
\beta_5(x_1, \ldots , x_6):= {\rm Alt}_6   {\cal L}_3\left(\frac{\Delta(1,2,3)\Delta(2,3,4)\Delta(3,1,5)} {\Delta(1,2,4)\Delta(2,3,5)\Delta(3,1,6)}\right).  
 \ee
 Here ${\cal L}_3$ is the single-valued version of the trilogarithm function, and $\Delta(i,j,k):= \langle \Omega_3, l_i\wedge l_j\wedge l_k\rangle$ 
 where $l_i \in \C^3-\{0\}$ 
  lifts the point $x_i$, and $\Omega_3$ is a volume form in $\C^3$. The function $\beta_5$ satisfies the relation
 $$
 \sum_{i=1}^7(-1)^i \beta_5(x_1, \ldots , \widehat x_i, \ldots , x_7)=0.
  $$
 The $5-$cocycle is defined by 
 $$
b_5(g_1, \ldots , g_6):= \beta_5(g_1 \cdot x,  \ldots , g_6\cdot x), ~~~~x\in \C{\rm P}^2, ~~g_i \in \G_\C.
 $$
 The cocycle $b_5$ extends to a cocycle on ${\rm GL}_m$,  $m>3$ by using configurations of partial flags, see \cite{G93}. \vskip 2mm

 It became clear   later that the mysterious triple ratio  in the formula (\ref{triple}) is nothing else but 
 a cluster Poisson coordinate on the 
 moduli space ${\rm Conf}_6({\rm P}^2)$ parametrising   $6-$tuples  points on ${\rm P}^2$ modulo the action of ${\rm PGL}_3$, which is a cluster Poisson variety of the finite type $D_4$. This suggests strongly that there is a quantum deformation 
 of the element $\beta_5$ provided by an element 
 $$
 {\bf B}_5 \in {\cal O}_q({\rm Conf}_6({\rm P}^2)), 
 $$
  as well as similar $5-$cocycles on ${\rm GL}_m$, $m>3$.  Conjecture \ref{QCCON} is a generalization of this. \vskip 2mm

  The main difference between   classical and   quantum cocycles is that the latter is a sum of   commutative expressions, while the former is an ordered product  of non-commuting 
  expressions.   
The order is crucial, and provided by the cluster Poisson transformation describing the flip of a triangulation \cite{GS19}. 
  
  \vskip 2mm  
 \paragraph{Acknowledgment.}  This work was supported by the NSF grants DMS-1900743, DMS-2153059.  The first author is grateful to Linhui Shen for useful discussions. We are very grateful to the referee, who carefully red the text, and made   a lot of useful comments and suggestions which improved the exposition. 
 
 \section{The simplest example: $\G = {\rm SL}_2$} \la{SECT2}
  
The  cocycle ${\rm C}^{(\bullet)}$ for the generator of $\rH^4(\B_{{\rm SL_2} \bullet},  \mathbb{Z}_{{\mathcal{M}}}(2))$ has  three components. Using $\G= {\rm SL}_2$, they are:
\be
\begin{split}
&{\rm C}^{(1)} \in  \B_2\Bigl(\mathbb{Q}(\G^4 )\Bigr)^{{\G}},\\
&{\rm C}^{(2)} \in  \Bigl(\mathbb{Q}(\G^3 )^\times \bigwedge  \mathbb{Q}(\G^3 )^\times \Bigr)^{{\G}},\\
 &{\rm C}^{(3)}\in \Bigl(\mathbb{Q}(D)^\times\Bigr)^\G, ~~~~~D\in {\rm div}(\G^2)^\G.\\
\end{split}
 \ee

 Fix  a complex two dimensional   vector space $V_2$ with an area form $\Delta$. 
Then a flag  is   a 1-dimensional subspace of $V_2$, and a decorated flag is a non-zero vector $v \in V_2$. 
Two  decorated flags are   in generic position if $\Delta(v_1 v_2) \neq 0$. 
To construct a cocycle we pick a non-zero vector $v \in V_2$. 

\paragraph{The cycle ${\rm C}^{(3)}$.} There is   $\G-$invariant divisor 
\be
D_v\subset \G^2, ~~~~~~~D_v:= \{(g_1, g_2)\in \G^2 ~|~ \Delta(g_1v, g_2v)=0\}.
\ee
 It carries a function  
 \be
 \lambda_v(g_1, g_2):= \frac{g_1v}{g_2v}, ~~~~~~(g_1, g_2)\in D_v \subset \G^2.
 \ee
Note that the residue of this function is equal to zero. So we set
\be
{\rm C}^{(3)}:=  (D_v, \lambda_v).
\ee
The $\G-$invariant divisor with a function $(D_v, \lambda_v)$ in $\G^2$ is  the same thing as a divisor with a function $( D'_v, \lambda'_v)$ for the quotient   $\G^2/\G = \G$. 
Namely, we  identify $\G$ with the section $\{e\} \times \G\subset\G^2$. 

 To check that the   current $2 \pi i \cdot d\log ( \lambda'_v) \delta(D'_v) $    generates   $\rH^3({\rm SL}_2(\mathbb{C}), \mathbb{Z}(2))$, we integrate  it over the cycle generating  the 3-dimensional homology of ${\rm SL}_2(\mathbb{C}) $, given by the subgroup  $SU(2)$. Precisely, pick a Hermitian form $\langle, \rangle$ in $V_2$ and an orthonornal basis   $(v,w)$ containing $v$. Then 
\be
\begin{split}
&SU(2) = \left(\begin{matrix} \alpha & \beta \\ -\overline{\beta} & \overline{\alpha} \end{matrix}\right) ~~~~~~\alpha, \beta \in \mathbb{C}, ~~|\alpha|^2 + |\beta|^2 = 1.\\
&D'_v=  \left(\begin{matrix} a^{-1} & b \\ 0  &  a  \end{matrix}\right), ~~~~D'_v\cap SU(2) =  \left(\begin{matrix} \alpha & 0 \\ 0  &  \overline \alpha  \end{matrix}\right), |\alpha|=1, ~~~~\lambda'_v=\alpha.\
\end{split}
\ee 
Integrating the current over   $ SU(2)$ we get 
$  2\pi i \cdot  \int\limits_{|\alpha| = 1} d\log \alpha = (2\pi i)^2.$
So  its cohomology class   generates the group
 $\rH^3({\rm SL_2}(\mathbb{C}) , \mathbb{Z}(2)).$ 

\paragraph{The component ${\rm C}^{(2)}$.} Below we use the notation $v_i:= g_iv$. We define ${\rm C}^{(2)}$ by setting 
\be
\begin{split}
&{\rm C}^{(2)}  \in  \Bigl(\mathbb{Q}({\rm SL}_2^3)^\times \wedge \mathbb{Q}({\rm SL}_2^3)^\times \Bigr)^{{\rm SL_2}}\\
&{\rm C}^{(2)} = \Delta(v_1 v_2) \wedge \Delta(v_1 v_3) + \Delta(v_1 v_3) \wedge \Delta(v_2 v_3) + \Delta(v_2 v_3) \wedge \Delta(v_1 v_2).\\
\end{split}
\ee
 Let us compute the residue    of ${\rm C}^{(2)} $. The divisors  supporting the residue  are:
$$
D_{ij} = \{\Delta(v_i v_j) = 0\}.
$$ 
The residue of ${\rm C}^{(2)} $ at the divisor $D_{12}$ is 
\be
\begin{split}
&{\rm res}_{\Delta(v_1v_2)=0}({\rm C}^{(2)} ) = {\rm res}_{\Delta(v_1v_2)=0}\Bigl(  \Delta(v_1v_2) \wedge \frac{\Delta(v_1v_3)}{\Delta(v_2v_3)}\Bigr) 
  =  {\frac{\Delta(v_1v_3)}{\Delta(v_2v_3)}}=  \Bigl({\frac{v_1}{v_2}}\Bigr) = (D_{12}, \lambda_{1/2}).  \\
  \end{split}
\ee
The result  does not depend on $v_3$ since on the divisor $  \{\Delta(v_1v_2) = 0\} $ the vectors $v_1$ and $v_2$ are parallel. 
The total   residue is 
$$
{\rm res}({\rm C}^{(2)} ) =  (D_{12}, \lambda_{1/2}) + (D_{23}, \lambda_{2/3}) +  (D_{31}, \lambda_{3/1}) = s^* {\rm C}^{(3)}.
$$
It  splits into three parts, one  for each edge of the triangle. So we can set

\paragraph{The component ${\rm C}^{(1)}$.} 
Consider  the cross-ratio 
\be
{\rm C}^{(1)}:=\{r_2(v_1,v_2,v_3,v_4)\}_{B_2} = \left\{-\frac{\Delta(v_1 v_2){\Delta(v_3  v_4)} }{\Delta(v_1  v_4)\Delta(v_2  v_3)}   \right\}_{B_2}.
\ee  
The 5-term relation in the definition of the Bloch group implies that  $s^*{\rm C}^{(1)}=0$.

The key step is the calculation of the    differential in the Bloch complex: 
\be
\la{OL}
\delta{\rm C}^{(1)} = \delta \{r_2(v_1,v_2,v_3,v_4)\}  =- \frac{1}{2} {\rm Alt}_4 \Bigl(\Delta(v_1 v_2) \wedge \Delta(v_1 v_3)\Bigr).
\ee
where $ {\rm Alt}_4$ means that we take the alternating sum over all permutations of vectors $v_1,v_2,v_3,v_4$. 
\begin{figure}[!h]
\begin{tikzpicture}[line join = round, line cap = round]
\pgfmathsetmacro{\factor}{1};
\tikzstyle{arrow} = [thick,->,>=stealth];
\coordinate [label=above:$v_1$] (A) at (4,4);
\coordinate [label=right:$v_2$] (B) at (6,-2);
\coordinate [label=left:$v_3$] (C) at (-2,-2);
\coordinate [label=below:$v_4$] (D) at (4,-4);
\coordinate [label=right:$\Delta (v_1v_2)$] (E) at (5,1);
\coordinate [label=below:$\Delta (v_2v_3)$] (F) at (2,-2);
\coordinate [label=below:$\Delta (v_3v_4)$] (G) at (1,-3);
\coordinate [label=left:$\Delta (v_1v_3)$] (H) at (1,1);
\coordinate [label=right:$\Delta (v_2v_4)$] (I) at (5,-3);
\coordinate [label=right:$\Delta (v_1v_4)$] (J) at (4,-1);

\draw[-, opacity=.5] (A)--(B)--(D)--(C)--cycle;
\draw[arrow, opacity=1] (E)--(H);
\draw[arrow, opacity=1] (H)--(J);
\draw[arrow, opacity=1] (J)--(E);
\draw[arrow, opacity=1] (H)--(F);
\draw[arrow, opacity=1] (F)--(E);
\draw[arrow, opacity=1] (G)--(H);
\draw[arrow, opacity=1] (J)--(G);
\draw[arrow, opacity=1] (E)--(I);
\draw[arrow, opacity=1] (I)--(J);
\draw[arrow, opacity=1] (F)--(G);
\draw[arrow, opacity=1] (G)--(I);
\draw[arrow, opacity=1] (I)--(F);

\draw[-, opacity=.5] (B)--(C);
\draw[dashed, opacity=.5] (A)--(D);
\end{tikzpicture}
\caption{Calculating $\delta{\rm C}^{(1)} $ for the group ${\rm SL_2}$, and  the octahedron.}
\end{figure}
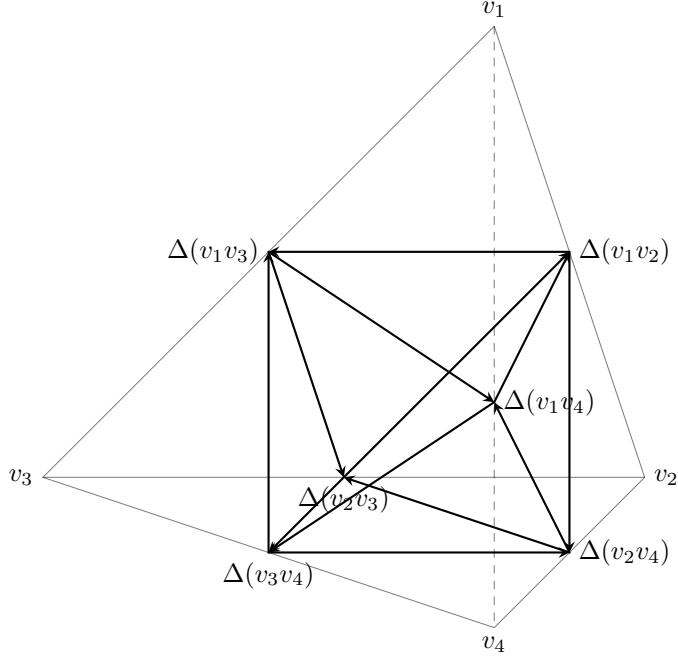

We picture $\delta{\rm C}^{(1)} $ on  Figure 3 as a 3-dimensional simplex with four flags at the vertices, and  elements
 $\Delta(v_iv_j)$  at the centers of the corresponding edges. 
 Each arrow represents a summand in   (\ref{OL}). For example the arrow $\Delta(v_1v_2) \to \Delta(v_1v_3)$ represents  
$ \Delta(v_1 v_2) \wedge \Delta(v_1 v_3)$.  
The terms in  (\ref{OL})  split into parts that live  on the   faces, i.e.  depend only on three flags.

  \section{The  components  
  ${\rm C}^{(1)},  {\rm C}^{(2)}$ of the cocycle} \la{SECT4}

\paragraph{1. Cluster varieties set-up.} 
Let us recall quivers, also known as seeds, see  \cite[Definition 1.4]{FG03b}.
\begin{definition} A quiver ${\bf c} $ is a data $ \{\Lambda, (*,*), \{e_i\}, \{d_i\}, i \in {\rm I}, {\rm I}_0\subset {\rm I}\}$, where:
\begin{itemize}
\item $\Lambda$ is an integral lattice; 
  $(*,*)$ is a bilinear symmetric $\Q-$valued form form on $\Lambda$; 
  $\{e_i\}$   is a basis for $\Lambda$ parametrised  by a  is a finite set ${\rm I}$ - we   call its elements vertices,  ${\rm I}_0$ is  the subset of  frozen vertices; and 
  $\{d_i\}$ is a set of positive integers such that:
 $\varepsilon_{ij} = (e_i,e_j) d_j \in \mathbb{Z}$ unless $i,j \in {\rm I}_0$,  when $\varepsilon_{ij} \in \frac{1}{2}\Z$.  
 \end{itemize}
\end{definition}

We   describe a quiver geometrically by drawing a vertex for each basis element $e_i$, and  $n = \varepsilon_{ij}$ arrows from the vertex $e_i$ to the vertex $e_j$ if $n > 0$ or in the opposite direction if $n < 0$. 

\begin{definition} For each unfrozen vertex $e_k$ of a quiver    ${\bf c}$ 
   there is a quiver mutation  $\mu_k: {\bf c} \to {\bf c}'$  defined as the change of the basis for $\Lambda$: 
$$
e_i' = \begin{cases}
-e_k,\ &i=k\\
e_i + [\varepsilon_{ik}]_{+} e_k,\ &i\neq k, ~~~~~~~~[a]_{+}:=  max(a,0). \end{cases}
$$
\end{definition}
 
Let  $\{f_i\} \in {\rm Hom}(\Lambda,  \Q)$ be  the quasidual   to $\{e_i\}$ basis: $\langle f_i, e_j\rangle = d_i^{-1}\delta_{ij}$, and  $\Lambda^{\circ}$    the sublattice generated by $\{f_i\}$. 
Consider the split torus:
$$
 \mathcal{A}_{\Lambda}  := {\rm Hom} (\Lambda^\circ, \mathbb{G}_m).
 $$
It comes with  cluster  $\mathcal{A}-$coordinates  $\{{\rm A}_i\}$ provided by the basis $\{f_i\}$. 

One associates with the   mutation $\mu_k: {\bf c} \to {\bf c}'$ a transformation of the cluster   coordinates, acting   by  
 \be \la{AA}
 \begin{split}
&\mu_k^* A_i =A_i, \ \ \ i \not = k, \\
&\mu_k^* A_k =  \frac{1}{A_k} \left(\prod\limits_{\varepsilon_{ik}>0} A_i^{\varepsilon_{ik}} + \prod\limits_{\varepsilon_{ik}<0} A_i^{-\varepsilon_{ik}} \right). \\
\end{split}
\ee

The cluster variety ${\cal A}$ with the initial quiver ${\bf c}$ is obtained by gluing the tori $\mathcal{A}_{\Lambda} $ assigned to quivers obtained from ${\bf c}$ by sequences of mutations via 
the corresponding composition of the transformations (\ref{AA}). By the Laurent Phenomena  theorem \cite{FZI},   each   element  $A_i$ is a regular function on ${\cal A}$. 
The algebra of regular functions ${\cal O}({\cal A})$ is nothing else but the Fomin-Zelevinsky upper cluster algebra. \\

Therefore each  cluster    ${\bf c}$ on a cluster variety ${\cal A}$ 
 is given by a collection of cluster coordinates $(\A_1, ..., \A_m)$ and an 
exchange matrix $\varepsilon_{ij}$ with the skewsymmetrizers $d_i$. This data  is encoded  in a single element 
\be \la{WC1}
{\rm W}_{\bf c}:= \frac{1}{2}\cdot \sum_{i,j\in {\rm I}} d_i\varepsilon_{ij} \cdot \A_i\wedge \A_j\in {\cal O}^\times({\cal A}) \wedge {\cal O}^\times({\cal A})\otimes \Z[\frac{1}{2}].
\ee
 Note that  $2\cdot  {\rm W}_{\bf c}$  has integral coefficients, and ${\rm W}_{\bf c}$ has coefficients in  $\Z$ if ${\rm I}_0$ is empty.   
 
Let us assign to a cluster mutation $\mu_k: {\bf c} \to {\bf c}'$  a rational function, written in the coordinate system $\{A_i\}$ for the cluster ${\bf c}$ as  
\be
\widehat X_k^{\bf c}:= \prod_{j\in {\rm I}}A_j^{\varepsilon_{ij}}.
\ee
Then the elements ${\rm W}_{\bf c}$ and ${\rm W}_{\bf c'}$ differ by the Steinberg relation  \cite[Proposition 6.3]{FG03b}:
\be \la{STR}
{\rm W}_{\bf c'} - {\rm W}_{\bf c} = d_k\cdot (1+\widehat X^{\bf c}_k) \wedge \widehat X^{\bf c}_k.\ee

 \paragraph{2. The  moduli space $\mathbf{\mathcal{A}}_{\G,\bS}$.}  
 Let us recall  the definition of the  moduli space  
   $\mathbf{\mathcal{A}}_{\G,\bS}$ \cite{FG1}.

\begin{definition} Let $\bS$ be a decorated surface. 
Let $\G$ be a simply-connected split semi-simple group. 

The moduli space $ {\mathcal{A}}_{\G,\bS}$   parametrises   twisted $\G$-local system  $\mathcal{L}$ on  $\bS$  together  with  
  a  flat section of the local system  $\mathcal{L} \times_{\G}\mathcal{A}$  near the special points and punctures.
\end{definition}
According to  the main result  of \cite{GS19}, the moduli space $\mathcal{A}_{\G,\bS}$ has a cluster ${\cal A}-$variety structure.

In particular, when the decorated surface $\bS$ is an oriented $n-$gon ${\rm p}_n$, we get the space 
$$
\mathcal{A}_{\G, {\rm p}_n} = {\rm Conf}_n({\cal A}):= \G \backslash {\cal A}^n, ~~~~~~{\cal A}:=\G/\U.
$$
The isomorphism depends on the choice of a vertex of the polygon. For example,  for the triangle $t$:  
\begin{itemize}
\item The space $\mathcal{A}_{\G,t}$ is the configuration space of   three decorated flags    - ${\rm Conf}_3(\mathcal{A})$.
\end{itemize}

\paragraph{3. An element  $ {\rm C}^{(2)}$.} Pick a reduced decomposition  of the longest element $w_0$ of the Weyl group:
 $$
 {\bf i}= (i_1, ..., i_n), ~~~~w_0=s_{i_1} ... s_{i_n}.
 $$
 In \cite{GS19}, there is a construction of the cluster coordinate system on the space ${\rm Conf}_3({\cal A})$,  given by  a collection of the regular functions, called the cluster coordinates 
 \be
( \A_1, \ldots , \A_m), ~~~~\A_i \in {\cal O}({\rm Conf}_3({\cal A})):= {\cal O}({\cal A}^3)^\G,
 \ee
 together with the exchange matrix $\varepsilon_{ij}\in \frac{1}{2}\Z$. We recall the construction of the cluster assigned to the reduced decomposition of ${\bf i}$ in Section \ref{SECt4}. 
Then the element ${\rm C}^{(2)}$ is defined (see  Definition \ref{4.6})  by 
 \be
 {\rm C}^{(2)}:= {\rm W}_{\bf c}=  \frac{1}{2}\cdot  \sum_{i,j} d_i\varepsilon_{ij} \cdot \A_i\wedge \A_j. 
 \ee

  \begin{figure}[!h]
\begin{subfigure}{0.3\textwidth}
\begin{tikzpicture}[line join = round, line cap = round]
\pgfmathsetmacro{\factor}{1};
\coordinate [label=above:$ {\cal F}_1$] (A) at (1,3);
\coordinate [label=right:$ {\cal F}_2$] (B) at (2,0);
\coordinate [label=left:$ {\cal F}_4$] (C) at (-2,0);
\coordinate [label=below:$ {\cal F}_3$] (D) at (1,-1);
\draw[-, opacity=.5] (A)--(B)--(D)--(C)--cycle;
\draw[-, opacity=.5] (B)--(C);
\draw[dashed, opacity=.5] (A)--(D);
\end{tikzpicture}
\caption{${\cal A}^4$ configuration.}
\end{subfigure}
\begin{subfigure}{0.3\textwidth}
\begin{tikzpicture}[line join = round, line cap = round]
\pgfmathsetmacro{\factor}{1};
\coordinate [label=above:$ {\cal F}_1$] (A) at (0,2);
\coordinate [label=right:$ {\cal F}_2$] (B) at (1.5,0);
\coordinate [label=left:$ {\cal F}_4$] (C) at (-1.5,0);
\coordinate [label=below:$ {\cal F}_3$] (D) at (0,-2);
\draw[-,fill=green!30, opacity=.5] (A)--(B)--(D)--(C)--cycle;
\draw[-, opacity=.5] (B)--(C);
\end{tikzpicture}
\caption{cluster coordinates ${\bf c}_{1,3}$.}
\end{subfigure}
\begin{subfigure}{0.2\textwidth}
\begin{tikzpicture}[line join = round, line cap = round]
\pgfmathsetmacro{\factor}{1};
\coordinate [label=above:$ {\cal F}_1$] (A) at (0,2);
\coordinate [label=right:$ {\cal F}_2$] (B) at (1.5,0);
\coordinate [label=left:$ {\cal F}_4$] (C) at (-1.5,0);
\coordinate [label=below:$ {\cal F}_3$] (D) at (0,-2);
\draw[-,fill=red!30, opacity=.5] (A)--(B)--(D)--(C)--cycle;
\draw[-, opacity=.5] (A)--(D);
\end{tikzpicture}
\caption{cluster coordinates ${\bf c}_{2,4}$.}
\end{subfigure}

\end{figure}

 \paragraph{4. An element $ {\rm C}^{(1)}$.}  
 Consider  two  cluster coordinate systems ${\bf c}_{1,3}$ and ${\bf c}_{2,4}$ on the space  ${\rm Conf}_4({\cal A})$: 
 
 1. The one ${\bf c}_{2,4}$, obtained by amalgamating   triangles $({\cal F}_1, {\cal F}_2, {\cal F}_3)$ and   $({\cal F}_1, {\cal F}_3, {\cal F}_4)$.
 
 2. The one ${\bf c}_{1,3}$, given by amalgamating   triangles $({\cal F}_2, {\cal F}_3, {\cal F}_4)$ and    $({\cal F}_1, {\cal F}_2, {\cal F}_4)$. 
  
 According to one of the main results of \cite{GS19}, there exists   an ordered sequence of mutations $\mu_1, \ldots, \mu_n$ providing a cluster transformation between 
 the two cluster coordinate systems above. For each mutation $\mu_i$ there is a rational function $\widehat X_i$ on ${\rm Conf}_4({\cal A})$.  So we get a collection of rational functions  
  \be \la{FNCTX}
( \widehat X_1, \ldots , \widehat X_n), ~~~~\widehat X_i \in {\Bbb Q}({\rm Conf}_4({\cal A}))^\times. 
 \ee 

 \paragraph{5. The first cocycle condition.}  Thanks to (\ref{STR}), the difference of the elements ${\rm W}$  assigned to 
 the cluster coordinate systems ${\bf c}_{1,3}$ and ${\bf c}_{2,4}$ is the sum of the Steinberg relations provided by   functions (\ref{FNCTX}):  
  \be \la{WCH}
   {\rm W}_{{\bf c}_{1,3}}  -  {\rm W}_{{\bf c}_{2,4}} = \sum_{k=1}^N d_k \cdot (1+\widehat X_k)\wedge \widehat X_k.  
   \ee  
   This just means that setting 
   \be
   {\rm C}^{(1)}:= \sum_{k=1}^N d_k \cdot \{-\widehat X_k\}\in \B_2\Bigl({\Bbb Q}({\rm Conf}_4({\cal A}))\Bigr).
    \ee
  we get, at least modulo 2-torsion,  the first cocycle identity in (\ref{COSC}):
  \be \la{BCA1}
  \delta({\rm C}^{(1)}) = s^*({\rm C}^{(2)}).
    \ee

 \paragraph{6. Altering the cluster transformation.} According to \cite{GS19}, changing a reduced decomposition ${\bf i}$ we alter the chain $({\rm C}^1, {\rm C}^2, ...)$ by   a coboundary of an element of $\B_2(\mathcal{O}_{\G^3 })$. 
 
 \bt \la{GUR}
 Changing a cluster transformation  ${\bf c}_{1,3} \to {\bf c}_{2,4}$  does not affect the element $C^{(1)}$, since it is changed by a sum of the five-term relations,  modulo an order $6$ cyclic subgroup.  
 \et
 \begin{proof} Thanks to (\ref{WCH}), for a different cluster transformation ${\bf c}_{1,3} \to {\bf c}_{2,4}$ provided by a sequence of mutations associated with the functions 
 $\widehat Y_1, ..., \widehat Y_M$ we have 
 \be \la{60}
 \sum_{k=1}^N d_k \cdot (1+\widehat X_k)\wedge \widehat X_k -\sum_{k=1}^{M} d_k \cdot (1+\widehat Y_k)\wedge \widehat Y_k = 0.
 \ee
Denote by $\beta_\F$ the kernel of the   differential $\delta: \B_2(\F) \lra  \F^\times\wedge \F^\times$  in the Bloch complex (\ref{BLOCH}). 
Then identity (\ref{60}) just means that we get, modulo $2-$torsion,  an element of the group $\beta_{\Bbb F}$, 
where  ${\Bbb F}:=  {\Bbb Q}({\rm Conf}_4({\cal A}))$ is the function field on the   configuration space:
  \be \la{80q}
   \sum_{k=1}^N d_k \cdot  \{\widehat X_k\} -  \sum_{k=1}^M d_k \cdot  \{\widehat Y_k\} \   \in \   \beta_{\Bbb F}, 
 \ee
 Let  $\widetilde{\rm Tor}(\F^\times, \F^\times)$ be
  the unique non-trivial extension of the group ${\rm Tor}(\F^\times, \F^\times)$ by $\Z/2\Z$. By   Suslin's theorem \cite{S}, for any field $\F$, there is an exact sequence
 \be
 0 \lra   \widetilde{\rm Tor}(\F^\times, \F^\times)  \lra K_3^{\rm ind}({\F})\lra  \beta_\F \lra 0.
  \ee
 Note that $ \Z/2\Z= {\rm Tor}(\Q^\times, \Q^\times) =  {\rm Tor}(\Q(t_1, ..., t_n)^\times, \Q(t_1, ..., t_n)^\times) $. 
Next,  $K_3^{\rm ind}({\F}(t)) = K_3^{\rm ind}({\F})$. Therefore, since the configuration spaces are rational varieties,  the element (\ref{80q}) provides an element 
 of  $K_3^{\rm ind}({\Bbb Q})/(\Z/4\Z)$.  
  Suslin proved \cite[Corollary 5.3]{S} that the latter group is isomorphic to $\Z/6\Z$ - this uses the  Lee and Szczarba theorem  \cite{LS}.  
Therefore the element (\ref{80q}) belongs to the subgroup $\Z/6\Z$. 
 \end{proof}

\section{Cluster structure of the  space   ${\rm Conf}_3(\mathcal{A})$} \la{SECt4}

For the convenience of the reader,  we reproduce the definition of the clusters, that is cluster coordinates and quivers,  describing the cluster structure of the space ${\rm Conf}_3({\cal A})$, borrowing  the construction of the cluster coordinates  from \cite[Section 5]{GS19}, and the construction of quivers   from \cite[Section 7.2]{GS19}.

\paragraph{1. The set-up.}
Recall that $\G$ is a split semi-simple simply-connected algebraic group with the Cartan  group $\rH$, the Weyl group 
 $W$, and the Cartan matrix  $\{{\rm C}_{ij}\}_{i,j \leq r}$, 
 simple positive roots   $\alpha_i$ and   coroots   $\alpha_j^{\vee}$:
\be
\alpha_i : \rH \to \mathbb{G}_m,\ \alpha_i^{\vee} : \mathbb{G}_m \to \rH,\ \ \alpha_i \circ \alpha_j^{\vee} = {\rm C}_{ij}.
\ee
There is   a set of the fundamental weights $\Lambda_1,\dots , \Lambda_r$:
\be
\Lambda_i : \rH \to \mathbb{G}_m, ~~~~\ \ \Lambda_i \circ \alpha_j^{\vee} = \delta_{ij}.
\ee

The length and reduced decomposition of the Weyl group elements induce the Bruhat order of Bruhat cells.  
If  elements $w, w' \in W$ have   reduced decompositions such that the one for $w'$ is a substring of the one for $w$ then  $w \succ w'$. 
 If in addition   $l(w) = l(w') + 1$ then the cell $\B w' \B$ is a boundary divisor of  $\B w\B$.

A pinning for a generic pair of flags $\{\B,\B^-\}$ provides   maps $x_i : \mathbb{A}^1\to \U$ and $y_i : \mathbb{A}^1 \to \U^-$ for every simple root $\alpha_i$, where $\U$ is the maximal unipotent subgroup of $\B$ and $\U^-$ is the maximal unipotent in $\B^-$, such that each pair $x_i,y_i$ can be extended to a standard embedding $\gamma_i : {\rm SL_2} \to \G$.  
 A pinning allows   to  lift to the group $\G$ the generators of the Weyl group $W$  corresponding to simple roots:
$$\overline{{s_i}} := y_i(1) x_i(-1) y_i(1).$$
These elements satisfy the braid relations.
Therefore  we define the lift for all other elements of $W$ by using any reduced decomposition   $w= s_1 \cdot \dots \cdot s_m$, setting:
$ \overline{{w}} = \overline{{s_1}} \cdot \dots \cdot \overline{{s_m}}. 
 $
Using this,  we define the Bruhat   decomposition   of any element $g \in \G$: \be \la{BRD}
g = u  h\overline{n_{w}}   v, ~~~~~h \in \rH = \B \cap \B^-, ~~~~u,v \in U.
\ee
 Therefore any $\G-$orbit  in the space of pairs of decorated flags 
 \be
 ( {\mathcal{F}}, {\mathcal{G}}) \in {\rm Conf}_2(\mathcal{A}) = \G\backslash (\G/\U)^2 = \U\backslash \G/\U
 \ee
 has two invariants:
  the $\omega-distance$  $\omega( {\mathcal{F}}, {\mathcal{G}}) := w$,
and the  $h-distance$   $h(  {\mathcal{F}}, {\mathcal{G}}) :=h$, where $g \in \G$ is decomposed as in (\ref{BRD}).  
Each    fundamental weight $\Lambda_i$  gives rise to  a    regular function  on every  Bruhat cell:  
\be
\Delta_{i, w}(u  h\overline{n_{w}}   v):= \Lambda_i(h).
\ee

   \paragraph{2. Cluster $\mathcal{A}$-coordinates for the space  $\mathcal{A}_{\G,t} = {\rm Conf}_3(\mathcal{A})$.}   
For each reduced word ${\bf i}=(i_1, \ldots, i_m)$ of $w_0$ there are 
   chains of  distinct positive roots and coroots:
\be
\la{sequence.beta.906}
\alpha_k^{\bf i}:=s_{i_m}\ldots s_{i_{k+1}}\cdot \alpha_{i_{k}},\hskip 7mm
\beta_k^{\bf i}:=s_{i_m}\ldots s_{i_{k+1}}\cdot \alpha_{i_{k}}^\vee, \hskip 7mm k\in\{1,..., m\}.
\ee

\begin{lemma} \la{GSA}
 \cite[Lemma 5.3]{GS19}. Given any generic pair of decorated  flags $\{ {\mathcal{F}},  {\mathcal{G}}\}$,  i.e. $\omega( {\mathcal{F}}, {\mathcal{G}}) = w_0$, 
and a reduced decomposition  ${\bf i} = \{i_1, \dots. i_m\}$ of $w$,  there exists a unique chain of decorated flags  
 \be
\{  {\mathcal{F}} =  {\mathcal{F}^0} \stackrel{s_{i_1}}{\longleftarrow}  { \mathcal{F}^1} \stackrel{s_{i_2}}{\longleftarrow}    \dots  \stackrel{s_{i_m}}{\longleftarrow}  {\mathcal{F}^m} = { \mathcal{G}}\}
 \ee
 such that that for the consequtive decorated flags, counted from the right to the left, we have:
\be
\omega ( {\mathcal{F}^{k}}, {\mathcal{F}^{k-1}}) = s_{i_{k}}, ~~~~~~~~h( {\mathcal{F}^k}, {\mathcal{F}^{k-1}}) \in 
\begin{cases}
 \alpha^\vee_{i_k}({{\Bbb G}_m}),\text{ ~~if $ \beta^{\bf i}_k$ is simple},\\
1,\text{ ~~~  \ \ \ \ \  \ \ otherwise}.\\
\end{cases}
\ee
\end{lemma}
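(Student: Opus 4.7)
The plan is to construct the chain step by step along the reduced word ${\bf i}$, parametrize successive steps via the pinning, and then read the chain off a Gauss-type factorization of the element linking $\mathcal{F}$ and $\mathcal{G}$.

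First, I would use the diagonal $\G$-action to normalize $\mathcal{F} = [e\,\U]$. Genericity $\omega(\mathcal{F},\mathcal{G}) = w_0$ and the Bruhat decomposition then produce a unique representative $\mathcal{G} = [h_{\mathcal{G}}\,\overline{w_0}\,\U]$ with $h_{\mathcal{G}} \in \rH$, which encodes the total $h$-distance. Any chain with the prescribed $\omega$-distances $s_{i_k}$ can be specified, after absorbing right-$\U$ freedoms into the representatives, by consecutive relations $f^{k-1} = f^k \cdot u^{(k)}\,\overline{s_{i_k}}\,h^{(k)}$ for some $u^{(k)} \in x_{i_k}(\mathbb{A}^1)$ and some $h^{(k)} \in \rH$, in which case $h(\mathcal{F}^k, \mathcal{F}^{k-1}) = h^{(k)}$ by construction.

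Second, iterating over $k = 1,\ldots,m$ and moving all $h^{(k)}$'s and $u^{(k)}$'s past the subsequent Weyl-lifts via the intertwining identity $h\,\overline{s_i} = \overline{s_i}\,s_i(h)$ and its analog for the root subgroups $x_i$, I would rewrite $f^0$ as $f^m$ times a product of the shape $\bigl(\prod_{k=m}^{1} u^{(k)}_\alpha\bigr) \cdot \overline{w_0} \cdot h^{\ast}$, where $u^{(k)}_\alpha$ now lies in the root subgroup of $\alpha_k^{\bf i}$ and $h^{\ast} = \prod_{k=1}^m s_{i_m}\cdots s_{i_{k+1}}(h^{(k)})$. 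Substituting $h^{(k)} = \alpha_{i_k}^\vee(c_k)$ turns the $k$-th factor of $h^\ast$ into $\beta_k^{\bf i}(c_k)$. Matching with the normalized representative of $\mathcal{G}$ then decouples into an $\rH$-equation $\prod_{k=1}^m \beta_k^{\bf i}(c_k) = h_{\mathcal{G}}$ and a $\U$-equation which is triangular in the $u^{(k)}$-parameters thanks to the ordering of the positive roots $\alpha_1^{\bf i},\ldots,\alpha_m^{\bf i}$ produced by the reduced word.

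Third, the $\U$-system is triangular with unit diagonal, hence it admits a unique regular solution for the $u^{(k)}$'s. For the $\rH$-equation, imposing $h^{(k)} = 1$ whenever $\beta_k^{\bf i}$ is not simple collapses the product to one ranging over the $r$ positions where $\beta_k^{\bf i}$ is simple; since those exhaust the simple coroots, which form a basis of the cocharacter lattice of $\rH$, this collapsed equation has a unique solution for the remaining $c_k$'s. The main obstacle will be tracking the conjugations by the subwords $s_{i_m}\cdots s_{i_{k+1}}$ during the iterated rearrangement carefully enough that the families $\alpha_k^{\bf i}$ and $\beta_k^{\bf i}$ emerge correctly from the pinning, and verifying that the prescribed restriction on $h^{(k)}$ is precisely what cuts the residual $(m-1)r$-dimensional freedom of chains down to a unique point, matching the dimension count $m + r = \dim(\G/\U)$.
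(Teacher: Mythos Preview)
The paper does not prove this lemma: it is stated with a citation to \cite[Lemma 5.3]{GS19} and used as a black box, so there is no in-paper argument to compare your proposal against.

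Your Gauss-factorization approach is the natural one and is essentially how such statements are proved. A couple of points to watch. First, the paper's convention (\ref{BRD}) places the Cartan factor to the \emph{left} of $\overline{w}$, i.e.\ $g = u\,h\,\overline{w}\,v$, whereas you write $f^{k-1} = f^k\,u^{(k)}\,\overline{s_{i_k}}\,h^{(k)}$ with $h^{(k)}$ on the right; this forces $h(\mathcal{F}^k,\mathcal{F}^{k-1}) = s_{i_k}(h^{(k)})$ rather than $h^{(k)}$, which is harmless since $s_{i_k}$ preserves the subgroup $\alpha_{i_k}^\vee(\mathbb{G}_m)$, but the bookkeeping should be made consistent. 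Second, your dimension count $m + r = \dim(\G/\U)$ is correct and does explain why the system has a unique solution generically, but for the actual \emph{existence} at every generic pair you need slightly more than counting: you must check that the triangular $\U$-system is genuinely solvable (which it is, since the ordering $\alpha_1^{\bf i},\ldots,\alpha_m^{\bf i}$ is a convex order on positive roots) and that the $r$ simple coroots arising as the $\beta_k^{\bf i}$ at the distinguished positions really are \emph{all} the simple coroots, each exactly once (this is standard, since $\{\beta_k^{\bf i}\}$ is a permutation of the positive coroots). With those two verifications made explicit, the argument goes through.
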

 
 We also note that
 \be
 h_k:= s_{i_m}\ldots s_{i_{k+1}}\left(h( {\mathcal{F}^k}, {\mathcal{F}^{k-1}})\right) = 
\begin{cases}
 \alpha^\vee_{i}({b_i}),\text{ ~~if $ \beta^{\bf i}_k=\alpha_i^\vee$},\\
1,\text{ ~~~  \ \ \ \ \  \  otherwise}.\\
\end{cases}
 \ee

   Recall the involution $* : {\rm I}\to {\rm I}$   such that $\alpha_{i^*}^{\vee} = -\omega_0(\alpha_{i}^{\vee})$. 
Let $w^* := \omega_0 w \omega_0^{-1}$. Then any reduced decomposition $w = s_{i_1} \cdot \dots \cdot s_{i_k}$ provides  a reduced decomposition $w^* = s_{i_1^*} \cdot \dots \cdot s_{i_k^*}$. 
Note that  $\omega_0^* =   \omega_0$.

\begin{figure}[!h]
\centering
\definecolor{qqqqff}{rgb}{0,0,1}
\definecolor{ududff}{rgb}{0.30196078431372547,0.30196078431372547,1}
\definecolor{zzttqq}{rgb}{0.6,0.2,0}
\begin{tikzpicture}[line cap=round,line join=round,>=triangle 45,x=1cm,y=1cm]
\clip(-5.170627627517941,-1.563769592209432) rectangle (5.6002501559139874,5.516336418333225);
\draw [line width=1pt] (0,4)-- (-4,0);
\draw [line width=1pt] (-4,0)-- (4,0);
\draw [line width=1pt] (4,0)-- (0,4);
\draw [line width=1pt,color=qqqqff] (-4,0)-- (-2.399839857699015,0);
\draw [line width=1pt,color=qqqqff] (-2.399839857699015,0)-- (-0.9285587628335188,0);
\draw [line width=1pt,color=qqqqff] (2.285797789462384,0)-- (4,0);
\draw [line width=1pt] (0,4)-- (-2.399839857699015,0);
\draw [line width=1pt] (0,4)-- (-0.9285587628335188,0);
\draw [line width=1pt] (0,4)-- (2.285797789462384,0);
\begin{scriptsize}
\draw [fill=zzttqq] (-4,0) circle (3.5pt);
\draw[color=zzttqq] (-4.0964987125791266,-0.29552630371658833) node {$ {\mathcal{F}_2}= {\mathcal{F}_2^0}$};
\draw [fill=ududff] (0,4) circle (2.5pt);
\draw[color=ududff] (-0.0199248669368797,4.314262344892182) node {$ {\mathcal{F}_1}$};
\draw [fill=zzttqq] (4,0) circle (2.5pt);
\draw[color=zzttqq] (3.9151769920562897,-0.30287843872234393) node {$ {\mathcal{F}_2^m}= {\mathcal{F}_3}$};
\draw[color=black] (0.35082576180450126,-0.28231438854967552) node {...};
\draw [fill=zzttqq] (-2.399839857699015,0) circle (2.5pt);
\draw[color=zzttqq] (-2.5739412548588622,-0.30287843872234393) node {$ { {F}_2^1}$};
\draw [fill=zzttqq] (-0.9285587628335188,0) circle (2.5pt);
\draw[color=zzttqq] (-1.12557065872501245,-0.29552630371658833) node {$ {\mathcal{F}_2^2}$};
\draw [fill=zzttqq] (2.285797789462384,0) circle (2.5pt);
\draw[color=zzttqq] (1.79960203784422,-0.2808220337050771) node {$ {\mathcal{F}_2^{m-1}}$};
\draw[color=qqqqff] (-3.1010016233977447,0.1603060666402598) node {$s_{i_1}^*$};
\draw[color=qqqqff] (-1.5055883271487833,0.24117955170357153) node {$s_{i_2}^*$};
\draw[color=qqqqff] (3.1336088614829896,0.2191231466863047) node {$s_{i_m}^*$};
\end{scriptsize}
\end{tikzpicture}
\caption{The $\mathcal{A}_{\G,t}$ triangle.}
\end{figure}
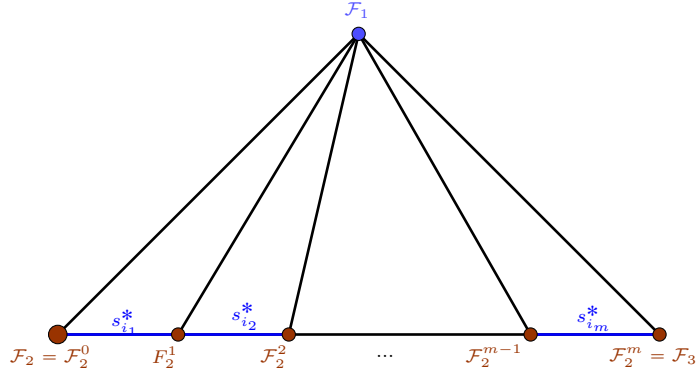

\bd \la{6.2}
The cluster  $\mathcal{A}$-coordinates on the space ${\cal A}_{\G, t}$ are defined as follows. Pick a vertex of the triangle $t$ 
with a decorated flag ${\cal F}_1$, and a reduced decomposition 
${\bf i}= (i_{1}, \ldots , i_{m})$ of $w_0$. Then: 

\begin{itemize}
	\item The \underline{frozen cluster coordinates} are:  
	$$\Delta_i( {\mathcal{F}_1},  {\mathcal{F}_2}),  ~~\Delta_i( {\mathcal{F}_1}, {\mathcal{F}_3}), ~~\Delta_i( {\mathcal{F}_3}, {\mathcal{F}_2}), ~~~~\forall i \in {\rm I}.
	$$

\item    Let ${\bf i}^*= (i^*_{1}, \ldots , i^*_{m})$. By Lemma \ref{GSA}, there is a unique chain of decorated flags, see Figure 2, 
with respect to the reduced decomposition ${\bf i}^*$:
$$\{ {\mathcal{F}_2}=  {\mathcal{F}_{23}^0} \longleftarrow  {\mathcal{F}_{23}^1} \longleftarrow  \ldots \longleftarrow   {\mathcal{F}_{23}^m} =  {\mathcal{F}_3} \}.
$$ 
Then the \underline{unfrozen  cluster coordinates} are: 
$$A_p = \Delta_{i_p}( {\mathcal{F}_1}, {\mathcal{F}_{23}^p}),$$
where $p$ runs through  indices $1, ..., m$  such that $i_p$ is not the rightmost simple reflection $i$ in ${\bf i}$,   $\forall i \in {\rm I}$. 
\end{itemize}
\ed

We stress that:

\begin{itemize}
\item Unfrozen vertices  depend on all three decorated flags;  we  picture them inside of the triangle.

  Frozen   vertices  depend only on two decorated flags;  we picture them on the sides  of the triangle.
  
  Cluster coordinates on the space ${\rm Conf}_2({\cal A})$   are labeled by the vertices $i \in {\rm I}$ of the Dynkin diagram. 
  The twisted cyclic shift   $(\mathcal{F}_1, \mathcal{F}_2) \lms (\mathcal{F}_2, s_\G \mathcal{F}_1)$  amounts to the automorphism $i \lms i^*$ of   ${\rm I}$. 
\end{itemize}

 Let us   define the quiver  ${\bf Q}({\bf i})$ for ${\rm Conf}_3({\cal A})$, assigned to the  reduced word ${\bf i} = (i_1,\ldots, i_m)$ for $w_0$.

\paragraph{3. Elementary quivers ${\bf J}(i)$.}  
Let us define the   quiver ${\bf J}(i)$, where $i \in {\rm I}$. Its underlying set is:
\be \la{SJ}
 {J}(i):= \left({\rm I}-\{i\}\right) \cup \{i_l\}\cup \{i_r\} \cup\{i_e\}.
\ee
There is a {\it decoration} map $\pi:  {J}(i)\rightarrow {\rm I}$ which sends $i_l, ~ i_r$ and $i_e$ to $i$, and is the identity map on ${\rm I}-\{i\}$. The multipliers on $ {J}(i)$ are defined by pulling back the multipliers on ${\rm I}$. 
The skew-symmetrizable matrix $\varepsilon(i)$ is indexed by $ {J}(i)\times  {J}(i)$, and  defined as follows:
\be
\label{exchange.matrix.ele.quiver}
\varepsilon(i)_{i_l,j}=\frac{- C_{ij}}{2}, ~~~\varepsilon(i)_{i_r,j}=\frac{C_{ij}}{2}, ~~~~\varepsilon(i)_{i_r,i_l}=\varepsilon(i)_{i_l,i_e} =\varepsilon(i)_{i_e, i_r}=1; \hskip 7mm \varepsilon(i)_{jk}=0 ~~\mbox{if } i\notin\{j,k\}.
\ee
 A  quiver $ {\bf J}(i)$ is pictured by a directed graph with vertices labelled by the   set (\ref{SJ}) and arrows encoding the  exchange matrices $\varepsilon=(\varepsilon_{jk})$, where 
\[
\varepsilon_{jk}= \# \{\mbox{arrows from $j$ to $k$}\} - \# \{\mbox{arrows from $k$ to $j$}\}.
\] 
Here $\# \{\mbox{arrows from $a$ to $b$}\}$ is   the {\it total weight} of the arrows from $a$ to $b$, which is a half-integer.  
The arrows from $a$ to $b$ are either dashed, and  counted with the weight $\frac{1}{2}$, 
or solid, and t  counted with the weight $1$. 
For non simply laced cases  we   use special arrows, see Example \ref{typeB3quiver}.

  \begin{example}
\la{typeB3quiver}
The quivers $ {\bf J}(1),  {\bf J}(2)$ for type $B_3$, and their amalgamation ${\bf J}(1)\ast {\bf J}(2)$,   described below:

\begin{center}
\begin{tikzpicture}[scale=0.9]   
\node at (-7,0) {$d_1=2$};
\node at (-7,-1) {$d_2=1$};
\node at (-7,-2) {$d_3=1$};
         \draw[green,dashed] (-6,0) -- (10,0);     
  \draw[green,dashed] (-6,-1) -- (10,-1);
   \draw[green,dashed] (-6,-2) -- (10,-2);    
 \node [blue] at (-3,1) {$ {\bf J}(1)$};        
  \node [blue] at (2,1) {$ {\bf J}(2)$};      
  \node [blue] at (7,1) {$ {\bf J}(1) \ast {\bf J}(2)$};      
  
\node [circle,draw,fill,minimum size=3pt,inner sep=0pt, label=left:${1}_l$] (a0) at (-3.5,0) {};
\node [circle,draw,fill,minimum size=3pt,inner sep=0pt,  label=right:${1}_r$]  (b0) at (-2.5,0){};
\node [circle,draw,fill,minimum size=3pt,inner sep=0pt,  label=left:$2$] (c0) at (-3,-1) {};
\node [circle,draw,fill,minimum size=3pt,inner sep=0pt,  label=left:$3$] (d0) at (-3,-2) {};
 \node [circle,draw=black,fill=green,minimum size=3pt,inner sep=0pt, label=below: $1_e$] (e0) at (-3,-3) {};
\foreach \from/\to in {b0/a0, a0/e0, e0/b0}
                   \draw[directed, thick] (\from) -- (\to);
                   \draw[z-->] (a0) to (c0);
                   \draw[z-->] (c0) to (b0);     
                   
\node [circle,draw,fill,minimum size=3pt,inner sep=0pt, label=left:${2}_l$] (a1) at (1.5,-1) {};
\node [circle,draw,fill,minimum size=3pt,inner sep=0pt,  label=right:${2}_r$]  (b1) at (2.5,-1){};
\node [circle,draw,fill,minimum size=3pt,inner sep=0pt,  label=left:$1$] (c1) at (2,0) {};
\node [circle,draw,fill,minimum size=3pt,inner sep=0pt,  label=left:$3$] (d1) at (2,-2) {};
 \node [circle,draw=black,fill=green,minimum size=3pt,inner sep=0pt, label=below: $2_e$] (e1) at (2,-3) {};
\foreach \from/\to in {b1/a1, a1/e1, e1/b1}
                   \draw[directed, thick] (\from) -- (\to);
                   \draw[z-->] (c1) to (b1);
                   \draw[z-->] (a1) to (c1);     
                       \draw[directed, dashed] (a1) to (d1);
                   \draw[directed, dashed] (d1) to (b1);                              

\node [circle,draw,fill,minimum size=3pt,inner sep=0pt, label=left:${1}_l$] (c1) at (6,0) {};
\node [circle,draw,fill,minimum size=3pt,inner sep=0pt, label=right:${1}_r$] (c2) at (7,0) {};
\node [circle,draw,fill,minimum size=3pt,inner sep=0pt, label=left:${2}_l$] (a1) at (7,-1) {};
\node [circle,draw,fill,minimum size=3pt,inner sep=0pt,  label=right:${2}_r$]  (a2) at (8,-1){};
\node [circle,draw,fill,minimum size=3pt,inner sep=0pt,  label=left:$3$] (d1) at (7.5,-2) {};
\node [circle,draw=black,fill=green,minimum size=3pt,inner sep=0pt, label=below: $1_e$] (e1) at (6.5,-3) {};
\node [circle,draw=black,fill=green,minimum size=3pt,inner sep=0pt, label=below: $2_e$] (e2) at (7.5,-3) {};
                   \draw[directed, thick] (a2) to (a1);
                   \draw[directed, thick] (a1) to (e2);
                   \draw[directed, thick] (e2) to (a2);
                   \draw[directed, thick] (c2) to (c1);
                   \draw[directed, thick] (c1) to (e1);
                   \draw[directed, thick] (e1) to (c2);
                    \draw[directed, dashed] (d1) to (a2);     
                    \draw[directed, dashed] (a1) to (d1);     
                   \draw[z-->] (e2) to (e1);     
                    \draw[z-->] (c1) to (a1);
                   \draw[directed, thick] (a1) to (c2);
                    \draw[z-->] (c2) to (a2);

 \end{tikzpicture}
 \end{center}

\end{example}

\paragraph{4. The quiver  ${\bf H}({\bf i})$.}

 Recall   the  pairing $(\ast ,\ast)$   between the root   and coroot lattices, 
  the Cartan matrix  
$C_{ij}=(\alpha_i, \alpha_j^\vee)$, 
and the multipliers $d_j ={\langle\alpha_j^\vee, \alpha_j^\vee\rangle} \in \{1, 2,  3\}$, so that $ d_iC_{ij}$ is symmetric.

Given a reduced word ${\bf i}=(i_1, \ldots, i_m)$ of $w_0$, recall the 
   chains of  distinct positive roots $\alpha_j^{\bf i}$ and coroots $\beta_k^{\bf i}$ in (\ref{sequence.beta.906}).  
Let us define first an auxiliary quiver   ${\bf K}({\bf i})$. It    consists  of $m$   frozen vertices labeled by $(i_1, ..., i_m)$, 
with the multiplier  for the $k$th vertex given by $d_k = {\langle\alpha_{i_k}^\vee, \alpha_{i_k}^\vee\rangle} $,  and the exchange matrix   
\be \la{QV}
\varepsilon_{jk}= \left\{ \begin{array}{ll} \frac{{\rm sgn}(k-j)}{2} {(\alpha_j^{\bf i}, \beta_k^{\bf i})} &\mbox{if } i_j, i_k\in {\rm I},\\
0 &\mbox{otherwise}.\\
\end{array}
\right.
\ee

Then ${\bf H}({\bf i})$ is a full subquiver of ${\bf K}({\bf i})$ with the  vertices $k$ such that $\beta_k^{\bf i}$, and hence $\alpha_k^{\bf i}$, are simple.

   \paragraph{5. The quiver ${\bf Q}({\bf i})$.}  We use the amalgamation of quivers, introduced in \cite[Section 2.2]{FG05}.
\bd \la{6.3}
\la{quiver.amla.fg4.sec22}
Given a    reduced word ${\bf i} = (i_1,\ldots, i_m)$ for $w_0 \in W$, 
   $i_k\in {\rm I}$,  the quiver ${\bf Q}({\bf i})$ is the amalgamation of quivers $ {\bf J}(i_k)$ and ${\bf H}({\bf i})$: 
\[
{\bf Q}({\bf i}):=  {\bf J}(i_1)\ast\ldots\ast  {\bf J}(i_m)\ast {\bf H}({\bf i}).
\]

Precisely, the amalgamated quiver is defined as follows:

i) For every $i\in {\rm I}$ and for every $j=1,\ldots, m-1$, the right element of $ {\bf J}(i_j)$  at level $i$ is glued with the  left element of $ {\bf J}(i_{j+1})$ at level $i$.
The extra vertex of each $ {\bf J}(i_k)$ is glued with the $k$th vertex of ${\bf H}({\bf i})$. 

ii) The weight of an arrow obtained by gluing two arrows is the sum of the weights of those arrows. 
 \vskip 2mm

The unfrozen part of the quiver 
${\bf Q}({\bf i})$ 
is  the full subquiver  obtained by deleting the leftmost and rightmost vetices at every level $i \in {\rm I}$, and the  vertices of ${\bf H}({\bf i})$. 
\ed

The following Theorem is one of the main results of \cite{GS19}. 
  
 \bt Given a reduced decomposition ${\bf i}$ of $w_0 \in W$, the  coordinates $\{A_i\}$ from Definition \ref{6.2} and the quiver ${\bf Q}({\bf i})$  from Definition \ref{6.3}  describe an ${\cal A}-$cluster for the space 
 ${\rm Conf}_3({\cal A})$. The clusters assigned to different   reduced decompositions   are related by cluster ${\cal A}-$transformations. 
 The obtained cluster structure is invariant under the twisted cyclic shift $(\mathcal{F}_1, \mathcal{F}_2, \mathcal{F}_3) \lms (\mathcal{F}_2, \mathcal{F}_3, s_0\mathcal{F}_1)$. 
 \et

    \bd \la{4.6} Given a reduced ecomposition ${\bf i}$ of $w_0$, the element ${\rm C}^{(2)}$ is given by 
\be\begin{split}
&{\rm C}^{(2)}:= \frac{1}{2}\cdot \sum_{i,j} d_i \varepsilon_{ij} \cdot \A_i\wedge A_j,  \\
\end{split}
\ee    
where $\{A_i\}$ are the cluster coordinates from  Definition \ref{6.2}, and   $   \varepsilon_{ij}$ is the   exchange matrix for the   quiver 
${\bf Q}({\bf i})$  from Definition \ref{6.3}.
   \ed

 \section{The tame symbol  of ${\rm C}^{(2)}$ and the component ${\rm C}^{(3)}$}

Recall the tame symbol   (\ref{RESS}), also known as the residue. 
The cluster coordinates  $\{A_k\}$ are regular functions on ${\rm Conf}_3({\cal A})$. 
So for the element $ W_{\bf c}$, see (\ref{WC1}), its  tame symbol is supported  on the divisors 
$ \{ \A_k = 0\}.$ 

 The Bruhat divisor ${\cal B}_{s_kw_0} \subset {\rm Conf}_2({\cal A})$  is determined by the equation $\Delta_{k, w_0}=0$.     

Denote by $i_k$ the embedding ${\cal B}_{s_kw_0} \subset {\rm Conf}_2({\cal A})$. Recall the function $\Delta_{k,s_k\omega_0}$  on the  divisor ${\cal B}_{s_kw_0} $: 
\be
\Delta_{k, s_k\omega_0} = \Lambda_k(h_{s_k\omega_0 }(\mathcal{F}_2,\mathcal{F}_3)),~~~~~~(\mathcal{F}_2,\mathcal{F}_3)\in {\rm Conf}_2({\cal A}).
\ee
Recall   the rational function $F_k$ on  ${\cal B}_{s_kw_0}$: 
\be \la{elFk}
F_k:= i_k^*\Bigl(\Delta^{-1}_{k, s_k \omega_0}\prod_{j \in {\rm I} \setminus \{k\}}    (\Delta_{j, w_0})^{\frac{C_{kj}}{2}}\Bigr)^{d_k}.
\ee 
\bd \la{D5.1} The component ${\rm C}^{(3)}$ of the cocycle ${\rm C}^{(\bullet)}$ is defined as 
\be
{\rm C}^{(3)}:= \sum_{k \in {\rm I}}\Bigl( {\cal B}_{s_kw_0}, F_k \Bigr) \in \bigoplus\limits_{D \in {\rm div}{\rm Conf}_2({\cal A})} \mathcal{O}(D)^* .
\ee
\ed

Let $E$ be an oriented edge of the triangle $t$. Then there is a map 
$$
\beta_{E}: {\rm Conf}_3({\cal A}) \lra  {\rm Conf}_2({\cal A}).
$$ 
which forgets the element of ${\cal A}$ at the vertex of $t$ opposite to the edge $E$. 
It induces a map  
\be
\begin{split}
 \beta^*_{E} : &\bigoplus\limits_{D \in {\rm div} { {\rm Conf}}_2({\cal A})} \mathcal{O}(D)^* \to \bigoplus\limits_{D' \in  {\rm div}{\rm Conf}_3({\cal A})} \mathcal{O}(D')^*,\\
& (D, f_D) \lms (\beta^*_{E} D, \beta^*_{E} f_{D}).\\
\end{split}
\ee
We  count the vertices labeled by the  decorated flags counterclockwise: $({\cal F}_1, {\cal F}_3, {\cal F}_2)$. 
The edges $E$ of the triangle are labeled by the ordered pairs of flags $({\cal F}_{i}, {\cal F}_{j})$ assigned to them: $E = (i, j)$.

\begin{theorem} \la{BTH} The tame symbol of the element $W_{\bf c}$ on ${\rm Conf}_3({\cal A})$ is   the  sum  over the edges  of the triangle:
 \be \la{WC}
{\rm res}(W_{\bf c})  =    ( \beta^*_{1,3} +  \beta^*_{3,2} -\beta_{1, 2}^*)({\rm C}^{(3)}).
\ee

\end{theorem}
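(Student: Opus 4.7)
The plan is to compute the tame symbol of $W_{\bf c}$ divisor by divisor, isolating the contribution at each cluster coordinate's zero locus, and then to show that only the frozen coordinates produce nontrivial residues, which assemble into the prescribed edge sum.

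The first step is a formal residue computation. For any cluster coordinate $A_k$ with multiplier $d_k$, the terms of $W_{\bf c} = \tfrac{1}{2}\sum_{i,j} d_i\varepsilon_{ij}\,A_i\wedge A_j$ involving $A_k$ combine, via the skew-symmetry of $d_i\varepsilon_{ij}$, into $A_k \wedge \hat X_k^{d_k}$, where $\hat X_k := \prod_{j\neq k} A_j^{\varepsilon_{kj}}$ is the $\mathcal{X}$-coordinate attached to $k$. Since $\hat X_k$ is a monomial in the remaining $A_j$'s, it generically has valuation zero along $\{A_k=0\}$, so the tame-symbol formula (\ref{RESS}) yields that ${\rm res}_{\{A_k=0\}}(W_{\bf c})$ is a power of $\hat X_k$ restricted to the divisor, up to sign.

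Second, for an unfrozen vertex $k$ the cluster exchange relation $A_k A'_k = \prod_{\varepsilon_{kj}>0}A_j^{\varepsilon_{kj}} + \prod_{\varepsilon_{kj}<0}A_j^{-\varepsilon_{kj}}$ forces $\hat X_k|_{A_k=0} = -1$, so the residue there is $\pm 1$, a $2$-torsion unit. Consistently with the $\mathbb{Z}[\tfrac{1}{2}]$-coefficients used in Theorem \ref{TH1}, these unfrozen contributions are negligible, and the full residue of $W_{\bf c}$ is supported on divisors of frozen coordinates.

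Third, I would identify the frozen contributions with $(\beta_{1,3}^* + \beta_{3,2}^* - \beta_{1,2}^*)\,{\rm C}^{(3)}$. By Definition \ref{6.2}, the frozen coordinates are the $\Delta_i(\mathcal{F}_a, \mathcal{F}_b)$ on the three edges, whose zero loci pull back from the Bruhat divisors ${\cal B}_{s_i w_0}$ via the forgetful maps $\beta_{a,b}$. For such a frozen vertex at level $i$ on an edge, I would read off $\hat X_k$ directly from the amalgamated quiver ${\bf Q}({\bf i})$: the arrows at the same level inside the elementary quivers ${\bf J}(i_p)$ glued along that edge supply the factors $\Delta_{j, w_0}^{C_{ij}/2}$ (the weights $\pm C_{ij}/2$ in (\ref{exchange.matrix.ele.quiver}) are precisely the exponents), while the arrows linking to the auxiliary quiver ${\bf H}({\bf i})$ and to the adjacent unfrozen interior, after restriction to the Bruhat divisor via Lemma \ref{GSA}, produce the factor $\Delta_{k, s_k\omega_0}^{-1}$. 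Raising to the $d_k$-th power reproduces $\beta^*_{a,b} F_k$ exactly, and the sign pattern $(+, +, -)$ comes from comparing the orientation of each edge with the cyclic order on the triangle.

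The main obstacle is to verify this last identification rigorously. Tracking every arrow of ${\bf Q}({\bf i})$ in the non-simply-laced cases and evaluating the unfrozen contributions on a Bruhat divisor is delicate. I would reduce the task by invoking the twisted-cyclic-shift invariance of the cluster structure, so that it suffices to treat one edge for one judiciously chosen reduced word ${\bf i}$, for example one in which the chosen edge corresponds to a clean contiguous block of ${\bf J}(i_p)$'s. The evaluation of unfrozen factors on the Bruhat divisor is then governed by the decorated-flag chain of Lemma \ref{GSA}, reducing the remaining work to a finite combinatorial identity relating the Cartan matrix entries and the fundamental weights.
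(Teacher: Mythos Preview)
Your steps 1 and 2 match the paper exactly: the formal residue ${\rm res}_{\{A_k=0\}}(W_{\bf c})=i_k^*\prod_{j\neq k}A_j^{d_k\varepsilon_{kj}}$ and the exchange-relation argument showing that unfrozen vertices contribute only $2$-torsion are precisely what the paper does.

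The divergence is in the frozen case. The paper does \emph{not} fix one ${\bf i}$ and track all arrows. Its key simplification is that the tame symbol depends only on the $K_2$-class $[W_{\bf c}]$, which is independent of the reduced decomposition; hence for \emph{each} frozen level $k$ separately one may choose ${\bf i}$ to end in $s_k$. With this choice the frozen coordinate sits on the rightmost elementary quiver ${\bf J}(k)$, and the residue factors through the projection $\eta_k:{\rm Conf}_3({\cal A})\to{\cal A}(k)$ to the elementary configuration space. There $W(k)$ splits as $W'(k)+W_\Delta(k)$: the summand $W'(k)$ gives the Cartan factors $\prod_{j\neq k}\Delta_j^{d_kC_{kj}/2}$ directly from (\ref{exchange.matrix.ele.quiver}), while $W_\Delta(k)=\tau_k^*W_{{\rm Conf}_3({\cal A}_{{\rm SL}_2})}$ is pulled back from the ${\rm SL}_2$ computation of Section~\ref{SECT2}, yielding exactly $\Delta_{k,s_k\omega_0}^{-1}$. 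The sign pattern $(+,+,-)$ then comes from the opposite signs $\varepsilon(k)_{k_l,j}=-C_{kj}/2$ versus $\varepsilon(k)_{k_r,j}=+C_{kj}/2$ at the left and right ends of ${\bf J}(k)$.

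Your route---one global ${\bf i}$, direct arrow-chasing in ${\bf Q}({\bf i})$, then twisted cyclic invariance for the other edges---is not wrong in principle, but it forgoes this simplification. With a single ${\bf i}$ you cannot place every frozen vertex at the boundary of a clean elementary block, so the claim that the ${\bf H}({\bf i})$/interior arrows restrict to $\Delta_{k,s_k\omega_0}^{-1}$ remains the ``delicate combinatorial identity'' you yourself flag, rather than a one-line reduction to ${\rm SL}_2$. Varying ${\bf i}$ per frozen vertex is precisely the idea that dissolves this obstacle.
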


\begin{corollary}
 ${\rm div} ({\rm C}^{(3)})  = 0$.
\end{corollary}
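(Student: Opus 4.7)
The plan is to deduce the Corollary as a formal consequence of Theorem~\ref{BTH}. I apply the differential ${\rm div}$ to both sides of the identity
\begin{equation*}
{\rm res}(W_{\bf c}) = (\beta^*_{1,3} + \beta^*_{3,2} - \beta^*_{1,2})({\rm C}^{(3)}).
\end{equation*}
The left-hand side vanishes because $\underline K_2^\bullet({\rm Conf}_3({\cal A}))$ is a complex: the composition ${\rm div} \circ {\rm res}$ of tame symbol followed by valuation is identically zero. On the right-hand side, each projection $\beta_E : {\rm Conf}_3({\cal A}) \to {\rm Conf}_2({\cal A})$ is a smooth surjective morphism, so its pullback $\beta_E^*$ commutes with the Gersten differentials. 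We therefore obtain
\begin{equation*}
0 = (\beta^*_{1,3} + \beta^*_{3,2} - \beta^*_{1,2})({\rm div}({\rm C}^{(3)})).
\end{equation*}

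The Corollary will follow once I establish that the operator $\beta^*_{1,3} + \beta^*_{3,2} - \beta^*_{1,2}$ is injective on codimension-$2$ cycles of ${\rm Conf}_2({\cal A})$. For this, observe that a codim-$2$ cycle $Z \subset {\rm Conf}_2({\cal A})$ pulls back under $\beta_E^*$ to the codim-$2$ cycle of ${\rm Conf}_3({\cal A})$ along which the decorated flag at the vertex of the triangle opposite to $E$ varies freely. Two such cylindrical cycles $\beta_E^*(Z_1)$ and $\beta_{E'}^*(Z_2)$, for distinct edges $E \ne E'$, fibre freely in two different directions; their equality as subvarieties of ${\rm Conf}_3({\cal A})$ would force the defining condition to be independent of two of the three decorated flags, hence to descend to ${\rm Conf}_1({\cal A})$, which is a point since $\G$ acts transitively on ${\cal A} = \G/\U$. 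Consequently no nontrivial cancellation is possible among the three summands, the desired injectivity holds, and the displayed equation above forces ${\rm div}({\rm C}^{(3)}) = 0$.

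The substantive content lives entirely inside Theorem~\ref{BTH}; the Corollary is a purely formal consequence of being in a Gersten-type complex combined with the simplicial structure of ${\rm B}\G_\bullet$. As an independent check, one could instead compute ${\rm div}(i_k^* F_k)$ directly from the definition~(\ref{elFk}) by tracking the vanishing loci of the generalized minors $\Delta_{j,w_0}$ and $\Delta_{k,s_k\omega_0}$ on deeper Bruhat cells in the closure of ${\cal B}_{s_k w_0}$; the combinatorial bookkeeping of the exponents $d_k C_{kj}/2$ and $-d_k$ against the Bruhat incidence relations would be the main obstacle in such a hands-on approach, and is precisely what the route via Theorem~\ref{BTH} sidesteps.
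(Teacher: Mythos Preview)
Your proof is correct and follows essentially the same approach as the paper's: both apply ${\rm div}\circ{\rm res}=0$ to the identity of Theorem~\ref{BTH}, then argue that the three pullback cycles $\beta^*_{i,j}({\rm div}\,{\rm C}^{(3)})$ cannot share a common codimension-two component, forcing each to vanish. Your formulation via injectivity of the operator $\beta^*_{1,3}+\beta^*_{3,2}-\beta^*_{1,2}$ is slightly more explicit than the paper's, which simply observes (working on ${\cal A}^3$) that a point lying in two such cycles satisfies independent codimension-two constraints on two different pairs of decorated flags, yielding intersection of codimension~$>2$.
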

\begin{proof}
We know that 
${\rm div} \circ  {\rm res} (W_{\bf c} ) = 0$ 
and 
\be
  {\rm div}\beta^*_{1,3}({\rm C}^{(3)}) +  {\rm div}\beta^*_{3,2}({\rm C}^{(3)})  -  {\rm div}\beta^*_{1, 2}({\rm C}^{(3)}) = 0. 
\ee
The codimension two cycles
  $ {\rm div}\beta^*_{i,j}({\rm C}^{(3)})$ can not share a common    codimension two component.  This is clear for the pull back 
to ${\cal A}^3$, since  a  point $({\cal F}_1, {\cal F}_2, {\cal F}_3)$ which lies in two cycles $ {\rm div}\beta^*_{i,j}({\rm C}^{(3)})$ satisfies codimension two condition for each of the two pars of decorated flags, which gives the codimension $>2$ intersection.  Since their sum is zero, the claim follows. \end{proof}

\begin{proof}[Proof of the Theorem]
Recall  the element $W_{\bf c}$.  
Denote by $i_k^*(f)$ the pull back of a function $f$ to the  divisor $\{\A_k=0\}$. Then    the tame symbol of $W_{\bf c}$ is  
\be
\begin{split}
&{\rm res}_{\A_k=0}(W_{\bf c}) =  {\rm res}_{\A_k=0}\Bigl( \frac{1}{2} d_i\varepsilon_{ij}\cdot \sum_{i, j} \A_i \wedge \A_j\Bigr) = i_k^*\prod_{j\not = k}  A_j ^ {d_k\varepsilon_{kj}} ,\\
&{\rm res}(W_{\bf c})  = \bigoplus_{k}  \Bigl(\{\A_k=0\}, i_k^*\prod_{j\in {\rm I} -\{k\}} \ A_j^{d_k\varepsilon_{kj}}\Bigr).\\
\end{split}
\ee
To check the last equality in the top formula here note that $d_i\varepsilon_{ij}$ is skew-symmetric, and thus we count twice 
the contribution of $ \frac{1}{2} d_k\varepsilon_{kj}\cdot A_j  = A_j ^ {d_k\varepsilon_{kj}/2}$; note the multiplicative notation used here: 
$n \cdot A=A^n$. 

There are two cases for the vertex $v_k$ related to the coordinate $\A_k$.

\paragraph{1.}   {\it The coordinate $A_k$ corresponds to a  non-frozen vertex}. This is the general case, and fortunately we can handle without going into details what is the coordinate $A_k$. 
Indeed,  since the coordinate $A_k$ is non-frozen, we can   mutate   $A_k$, getting a new cluster coordinate $\A_k'$,  
which satisfies the exchange relation: 
$$
A_k \cdot A_k' = \prod_{\varepsilon_{kj} > 0}A_j^{\varepsilon_{kj}} + \prod_{\varepsilon_{kj} < 0}A_j^{-\varepsilon_{kj}}.
$$
All we need to know is that there exists at least one non-trivial mutation at $A_k$, providing a different regular function $A_k'$ on ${\rm Conf}_3({\cal A})$. 
Restricting it to the divisor $A_k = 0$, we have 
$$ 
0 =  \prod_{\varepsilon_{kj} > 0}A_j^{\varepsilon_{kj}} + \prod_{\varepsilon_{kj} < 0}A_j^{-\varepsilon_{kj}}.
$$
Therefore 
$$ 
\prod_{j} A_j^{\varepsilon_{kj}} = -1.
$$
This is a $2-$torsion in the multiplicative group. So the residue on the divisor $ A_k = 0 $ is a $2-$torsion. 

For example, for the moduli space $\mathcal{A}_{{SL_3}, t}$ with the special cluster coordinates illustrated on the Figure, the only non-frozen coordinate is the one in the center of the triangle. 
The exchange relation  is
\be \la{D4}
\begin{split}
 &\Delta_{\omega^*}(e_1\wedge e_2, f_1\wedge f_2, g_1\wedge g_2) \Delta_\omega(e_1, f_1, g_1) = \\
 &\Delta_\omega(e_1, e_2, f_1) \Delta_\omega(f_1, f_2, g_1) \Delta_\omega(g_1, g_2, e_1) +  
  \Delta_\omega(e_1, e_2, g_1) \Delta_\omega(f_1, f_2, e_1) \Delta_\omega(g_1, g_2, f_1).\\
   \end{split}
\ee
Here $\omega$ is a volume form in a three dimensional vector space $V$, $\omega^*$ is the dual volume form in $V^*$, and ${\cal F}_1 = (e_1, e_1 \wedge e_2)$, 
${\cal F}_2 = (f_1, f_1 \wedge f_2)$ and ${\cal F}_3 = (g_1, g_1 \wedge g_2)$  are decorated flags in $V$.

\begin{figure}[!h]
\definecolor{qqqqff}{rgb}{0,0,1}
\definecolor{zzwwff}{rgb}{0.6,0.4,1}
\definecolor{qqwuqq}{rgb}{0,0.39215686274509803,0}
\definecolor{zzttff}{rgb}{0.6,0.2,1}
\noindent	
 \hspace{3cm}
\scalebox	{0.45}{
\begin{tikzpicture}[line cap=round,line join=round,>=triangle 45,x=1cm,y=1cm]
\clip(-7.5,-2) rectangle (10,10);
\draw [->,line width=1pt,dash pattern=on 1pt off 3pt] (2.1,0) -- (-2,0);
\draw [->,line width=1pt] (0.0,3.0) -- (-4,3);
\draw [->,line width=1pt] (4,3) -- (0.0,3.0);
\draw [->,line width=1pt] (2,6) -- (-2,6);
\draw [->,line width=1pt] (-2,6) -- (0.0,3.0);
\draw [->,line width=1pt] (0.0,3.0) -- (2.1,0);
\draw [->,line width=1pt,dash pattern=on 1pt off 3pt] (2,6) -- (4,3);
\draw [->,line width=1pt] (-4,3) -- (-2,0);
\draw [->,line width=1pt] (-2,0) -- (0.0,3.0);
\draw [->,line width=1pt] (0.0,3.0) -- (2,6);
\draw [->,line width=1pt] (2.1,0) -- (4,3);
\draw [->,line width=1pt,dash pattern=on 1pt off 3pt] (-4,3) -- (-2,6);
\begin{scriptsize}
\draw [fill=zzttff] (-6,0) circle (2.5pt);
\draw[color=zzttff] (-6,-0.7) node {\LARGE{$\mathcal{F}_1 = (f_1,f_2)$}};
\draw [fill=zzttff] (6,0) circle (2.5pt);
\draw[color=zzttff] (6.3,-0.7) node {\LARGE{$\mathcal{F}_1  = (g_1,g_2)$}};
\draw [fill=zzttff] (0,9) circle (2.5pt);
\draw[color=zzttff] (0,9.6) node {\LARGE{$\mathcal{F}_1 =  (e_1,e_2)$}};
\draw [fill=qqwuqq] (-2,0) circle (3pt);
\draw[color=qqwuqq] (-2.2,-0.7) node {\LARGE{$\Delta   (f_1,f_2,g_1)$}};
\draw [fill=qqwuqq] (2.1,0) circle (3.5pt);
\draw[color=qqwuqq] (2.5,-0.7) node {\LARGE{$\Delta   (f_1,g_1,g_2)$}};
\draw [fill=qqwuqq] (-2,6) circle (3.5pt);
\draw[color=qqwuqq] (-4.2,6.1) node {\LARGE{$\Delta   (e_1,e_2,f_1)$}};
\draw [fill=qqwuqq] (2,6) circle (3.5pt);
\draw[color=qqwuqq] (4.3,6.1) node {\LARGE{$\Delta   (e_1,e_2,g_1)$}};
\draw [fill=qqwuqq] (-4,3) circle (3.5pt);
\draw[color=qqwuqq] (-6.0,3.0) node {\LARGE{$\Delta  (e_1,f_1,f_2)$}};
\draw [fill=qqwuqq] (4,3) circle (3.5pt);
\draw[color=qqwuqq] (6.0,3.0) node {\LARGE{$\Delta   (e_1,g_1,g_2)$}};
\draw [fill=qqqqff] (0.0,3.0) circle (3.5pt);
\draw[color=qqqqff] (0,2.0) node {\LARGE{$\Delta (e_1,f_1,g_1)$}};
\end{scriptsize}
\end{tikzpicture}}
\caption{The canonical coordinates on the moduli space $\mathcal{A}_{{SL_3}, t}$ of triples of decorated flags.  }
\end{figure}
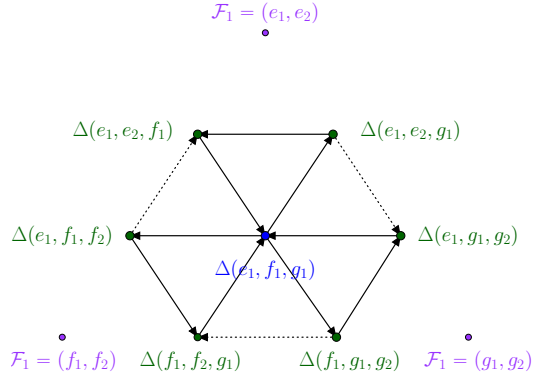

\paragraph{2.}  {\it The coordinate $A_k$ is frozen}. Then it corresponds to a  vertex located on a side of  the triangle $t$. 
This is the difficult case. Since the definition of the quiver ${\bf c}$  depends on the choice of the vertex of the triangle,   referred to as the top vertex, 
we   consider the residue for each of the three sides of the triangle. 

We   start from the right edge $({\cal F}_1, {\cal F}_3)$.  
Since the $K_2-$class  
 $[W_{\bf c}]$ does not depend on the choice of the reduced decomposition ${\bf i}$ of  $\omega_0$,  and the tame symbol depends only on    the $K_2-$class,   we can assume  that: 
 \be
 \la{DW}
 \mbox{The   decomposition ${\bf i}$    ends by  $s_k$. }
 \ee
  
\paragraph{\it The elementary configuration space $ {\cal A}({{k}})$  \cite[Section 7.5]{GS19}.} Let $k\in {\rm I}$. Consider 
the  space $ {\cal A}(k)$  parametrizing $\G$-orbits of triples of decorated flags $(\mathcal{F}, \mathcal{F}_l, \mathcal{F}_r)$ such that
\be
w(\mathcal{F}, \mathcal{F}_l)= w(\mathcal{F}, \mathcal{F}_r)=w_0,\hskip 7mm w(\mathcal{F}_r, \mathcal{F}_l)=s_{k^\ast}, \hskip 4mm h(\mathcal{F}_r, \mathcal{F}_l) \in {\rm H}(s_{k^\ast}).
\ee
\begin{center}
\begin{tikzpicture}[scale=0.6]
\node [circle,draw=red,fill=red,minimum size=3pt,inner sep=0pt,label=above:{\small ${\mathcal{F}}$}] (a) at (0,0) {};
 \node [circle,draw=red,fill=red,minimum size=3pt,inner sep=0pt,label=below:{\small ${\mathcal{F}}_l$}]  (b) at (-0.6,-3){};
 \node [circle,draw=red,fill=red,minimum size=3pt,inner sep=0pt,label=below:{\small $~~{\mathcal{F}}_r$}] (c) at (0.6,-3) {};
 \foreach \from/\to in {b/a, c/a}
                   \draw[thick] (\from) -- (\to);
  \draw[red, thick] (c) -- (b);                 
 \node[blue] at (0,-2.7) {${ \small s_{k^\ast}}$};
  \end{tikzpicture}
 \end{center}

There is a   cluster ${\cal A}-$coordinate system on the space ${\cal A}(k)$ parametrized by $ {J}(k)$ defined by:

\be
\la{ele.cor.cl.1}
\forall (\mathcal{F}, \mathcal{F}_l, \mathcal{F}_r)\in  {\cal A}(k), \hskip 10mm
{\displaystyle{A_j := \left\{    \begin{array}{ll}  
     \Delta_j(\mathcal{F}, \mathcal{F}_l) \hskip 7mm & \mbox{if } j\in {\rm I}-\{k\}\\
       \Delta_{k}(\mathcal{F}, \mathcal{F}_l) & \mbox{if } j=k_l\\
       \Delta_{k}(\mathcal{F}, \mathcal{F}_r) & \mbox{if } j=k_r\\
      \Lambda_{k^\ast}(h(\mathcal{F}_r, \mathcal{F}_l)) & \mbox{if }j=k_e. \\
   \end{array}\right.}}
\ee
If we fix a pinning in $\G$, then we have
\be \la{Param}
(\mathcal{F}, \mathcal{F}_l, \mathcal{F}_r):= (\U_-, h \U, gh \U), \ \ \ \ h \in \rH, g \in \varphi_k({\rm SL}_2/\U_{{\rm SL}_2}).
\ee
In particular, by  \cite[Lemma 7.13]{GS19}, for any 
  $(\mathcal{F}, \mathcal{F}_l, \mathcal{F}_r)\in   {\cal A}(k)$, we have
\be
\Delta_j(\mathcal{F}, \mathcal{F}_l)= \Delta_j(\mathcal{F}, \mathcal{F}_r).  \ \ \ \ \forall j \neq k.
\ee  
There is a canonical projection
\be \la{tpr}
\tau_k: {\cal A}(k) \lra {\rm Conf}_3({\cal A}_{\rm SL_2}).
\ee
It assigns to a triple of decorated flags $(\mathcal{F}, \mathcal{F}_l, \mathcal{F}_r)$ the intersections of the corresponding maximal unipotent subgroups with the 
subgroup $\varphi_k({\rm SL}_2) \subset \G$ corresponding to the simple coroot $\alpha^\vee_k$. The coordinates $\A_{k_l}, \A_{k_r}, \A_{k_e}$ are the pull backs of the standard coordinates on  
${\rm Conf}_3({\cal A}_{\rm SL_2})$.

Recall the matrix $\varepsilon(k)$ of $ {\bf J}(k)$ in \eqref{exchange.matrix.ele.quiver},  
and  the {\it canonical}  element  describing   cluster (\ref{ele.cor.cl.1}) on $ {\cal A}(k)$: 
\be
{\rm W}(k) :=  \sum_{i,j \in  {J}(k)} d_i {\varepsilon(k)}_{ij} \cdot A_i  \wedge   A_j.
\ee

Denote by ${\rm Conf}^\times_3({\cal A})$ the subspace of ${\rm Conf}_3({\cal A})$ given by the condition that each pair of the decorated flags are in the generic position. 
The amalgamation   provides an embedding of the space ${\rm Conf}^\times_3({\cal A})$ obtained by the amalgamation   into the product of the elementary spaces ${\cal A}(i_j)$ used for the  amalgamation: 
 $$
 {\rm Conf}^\times_3({\cal A}) \hra \prod_{j=1}^m {\cal A}(i_j). 
 $$
Denote by $\eta_k: {\rm Conf}_3({\cal A})   \lra {\cal A}(k)$ the composition of this map with  the projection onto the 
rightmost factor ${\cal A}(i_j) $ with $i_j=k$.
Thanks to   assumption (\ref{DW}), the cluster coordinate $\A_k$ on   ${\rm Conf}_3({\cal A})$ is the pull back $\eta^*_k\A_k$ of the cluster coordinate $\A_k$ on   ${\cal A}(k)$. This immediately implies that   
\be
{\rm res}_{\A_k=0}(W_{\bf c} )= \eta_k^*{\rm res}_{\A_k=0}(W(k)).
\ee
So the calculation of ${\rm res}_{\A_k=0}(W_{\bf c})$ boils down to the calculation of the residue of $W_k$ at the divisor $\A_k=0$ on the elementary space ${\cal A}(k)$. 
 Let us write $W(k)$ as a sum, see (\ref{DECM}):
 \be \la{DECM}
 W(k) = W'(k) + W_\Delta(k), \ \ \ \ W_\Delta(k):= \A_{k_r} \wedge \A_{k_l} + \A_{k_l} \wedge A_{k_e} + \A_{k_e} \wedge \A_{k_r}. 
\ee
Note that by the definition of the amalgamation, $\A_k = \A_{k_l}$. 
Next, we evidently have:
 \be \la{evid}
 {\rm res}_{\{\A_k=0\}}{\rm W}'(k) = \prod_{j \not \in \{k_l, k_r, k_e\}} A_j^{d_k {\varepsilon(k)}_{kj}}.
 \ee
 Since 
$\varepsilon(k)_{k_r,j}=\frac{C_{kj}}{2}$ by (\ref{exchange.matrix.ele.quiver}),  the factor  
 $( \Delta_{j, w_0})^{\frac{d_kC_{kj}}{2}}$ 
in  (\ref{elFk})   match the factor $A_j^{d_k {\varepsilon(k)}_{kj}}$ in (\ref{evid}):
\be
( \Delta_{j, w_0})^{\frac{d_kC_{kj}}{2}} = A_j^{d_k {\varepsilon(k)}_{kj}}.
\ee
Therefore the product in (\ref{evid}) match the product over $j \not = k$ in (\ref{elFk}). So it remains to show that
\be \la{LAC}
 {\rm res}_{\{\A_k=0\}}{\rm W}_\Delta(k) = \Delta^{-1}_{k, s_k \omega_0}\ee

 Note that $W_\Delta(k) = \tau_k^*W_{{\rm Conf}_3({\cal A}_{\rm SL_2})}$, and the divisor $\A_k=0$ on ${\rm Conf}_3({\cal A})$ is the pull back  
 of the divisor $\A_{i_r}=0$ on ${\rm Conf}_3({\cal A}_{{\rm SL}_2})$ under the map $\tau_k^*$.
So    parametrisation (\ref{Param}) and the calculation of the residue for ${\rm SL}_2$ from Section \ref{SECT2}  implies (\ref{LAC}).  
So we calculated the residue for the right edge of the triangle. 

For the left edge the calculation is similar. We claim that  the residue corresponding to the left edge 
 is given by $-\beta_{1,2}^*({\rm C}_3)$, see (\ref{WC}). Indeed, this    
  agrees with the fact that  $\varepsilon_{k_l, j} = -{\rm C}_{kj}/2$ while $\varepsilon_{k_r, j} = {\rm C}_{kj}/2$ in (\ref{exchange.matrix.ele.quiver}), as well as with the calculation of the residue 
  for ${\rm SL}_2$. 
 
Computation for the bottom side $({\cal F}_3, {\cal F}_2)$ follows easily using $W_\Delta(k) = \tau_k^*W_{{\rm Conf}_3({\cal A}_{\rm SL_2})}$  and (\ref{QV}). 
  \end{proof}

\section{Proof of Theorems \ref{TH1} and \ref{Th1.3}}  \la{S2}

\paragraph{Brylinsky-Deligne  results \cite[Section 4]{BD}.} 
    
 Let   $W^{(p)}\subset W$ the subset   parametrising Bruhat cells $\B w\B$ of codinension $p$. 
    In particular, 
\be
W^{(1)} = \{w_0s_k\in W\}, k \in {\rm I}; \ \ \ \ W^{(2)} = \{w_0s_is_j\in W\},~~ i,j \in {\rm I}, ~i \not = j.
\ee 
Let $X= {\rm Hom}(\rH, \mathbb{G}_m)$ be the character  group of the Cartan group $\rH$.  
Consider the following complex
\be \la{BDC}
  \bigwedge^2 X  \lra \bigoplus\limits_{W^{(1)}} X     \lra \bigoplus\limits_{W^{(2)}}\mathbb{Z}. 
\ee
Here, using the notation $(w, -)$ for an element of $\bigoplus\limits_{W^{(1)}}X$,  the   differentials are:
\be \la{82}
\begin{split}
& x_1 \wedge x_2 \lms \sum_{i \in {\rm I}} \Bigl(\langle x_1,\alpha_i^{\vee} \rangle  \cdot (w_0s_i,  x_2) -     \langle x_2,\alpha_i^{\vee} \rangle \cdot (w_0s_i, x_1)\Bigr).\\
&  (\omega_0s_j,  x) \lms \sum_{i \not = j} \Bigl((\omega_0s_is_j,  \langle x, s_j(\alpha_i^{\vee}) \rangle)  +  (\omega_0s_js_i, \langle x, \alpha_i^{\vee}\rangle)\Bigr).\\
\end{split}
\ee

\bp There exists a   natural map of complexes\\

\hspace{0.5cm}\xymatrix@C5pc{
 \bigwedge\limits^2 X 
\ar[r]^{ }
\ar[d]
& \bigoplus\limits_{W^{(1)}} X 
\ar[r]^{ }
\ar[d]
&
\bigoplus\limits_{W^{(2)}}\mathbb{Z}
\ar[d]\\
\bigwedge^2\mathbb{Q}(G)^* 
\ar[r]^{\rm res} & 
\bigoplus\limits_{D\in \G^{(1)}}\mathbb{Q}(D)^*  
\ar[r]^{\rm val}&
     \bigoplus\limits_{ \G^{(2)}}\mathbb{Z} 
}\ep

\begin{proof} The right vertical map is induced by the canonical embedding $W^{(2)} \hra \G^{(2)}$. 
We assign to a character $\chi$ of the Cartan group and an element $w\in W$  a regular function $\chi'_{\overline{{\omega}}}$ 
on the Bruhat cell ${\cal B}_w$:
\be
\chi'_{\overline{{\omega}}}(u\overline w hv):= \chi(h). 
\ee
We warn the reader that the functions $\chi'_{\overline{{\omega}}}$, and $\chi_{\overline{{\omega}}}$  from (\ref{xw}), are not the same since $\chi_{\overline{{\omega}}}(uh\overline w  v):= \chi(h)$. 
 We need now both since \cite[Section 4]{BD}  uses $\chi'_{\overline{{\omega}}}$, while \cite{GS19} uses $\chi_{\overline{{\omega}}}$.

The two left vertical maps are given by 
\be
\begin{split}
&\chi  \wedge \psi \lms \chi'_{\overline{{\omega_0}}} \wedge \psi'_{\overline{{\omega_0}}}\\
&(w_0s_k, \chi)   \lms \chi'_{\overline{{\omega_0 s_k}}}  \\
\end{split}
 \ee
Let us prove that we get a map of complexes. 
 The    valuations of the function  $\chi'_{\overline{{\omega_0}}}$ on a divisor can be non-zero only if it is a Bruhat divisor. In this case they are calculated as follows. For every	
Bruhat divisor  $\B\omega_1\B$, we can choose reduced Weyl decompositions of $\omega$ and $\omega_1$ so that:
$\omega = \omega' s_i \omega''\text{ and }\omega_1 = \omega' \omega''$. 
 Using the valuation formula of  Demazure,  \cite[Lemma 4.2]{BD} tells:
\be
{\rm val}_{\omega_1}(\chi'_{\overline{{\omega}}}) = \langle \chi, \omega''^{-1} (\alpha_i^{\vee})\rangle.
\ee
This is consistent with  formulas (\ref{82}). The Proposition is proved.
\end{proof}

Denote by   $Y_{sc}$ the lattice generated by the simple coroots. Consider the dual lattice  $X_{sc} \subset \mathcal{O}^*(H)$. We identify 
$X_{sc} = \bigoplus\limits_{D\in W^{(1)}}\mathbb{Z} $ by 
$x \lms \sum\limits_{s_i}  {x(\alpha_i^{\vee})}{\omega_0s_i}$.
Then we identify  the complex (\ref{BDC}) with 
\be \la{MGR}
\bigwedge^2 X   \lra X_{sc} \otimes X   \lra \bigoplus\limits_{W^{(2)}}\mathbb{Z}.
\ee
By \cite[Lemma in 4.4.4]{BD} or (\ref{82}),  the  $\omega_0s_is_j-$component of the differential of $C \in X_{sc} \otimes X $   is given by:
$$
C  \lra C(\alpha_i^{\vee},\alpha_j^{\vee}) + C(\alpha_j^{\vee},s_j(\alpha_i^{\vee})).
$$
We write the right hand side via the quadratic form $Q(y):={\rm C}(y,y)$ on $Y_{\rm sc}$ and the associated symmetric bilinear form  
$
\B(\alpha_i^{\vee},\alpha_j^{\vee}) = C(\alpha_i^{\vee},\alpha_j^{\vee}) + C(\alpha_j^{\vee},\alpha_i^{\vee}).
$ Namely, using $s_j(\alpha_i^\vee) = \alpha_i^\vee - \alpha_j(\alpha_i^\vee) \alpha_j^\vee$, we get:
\be
C(\alpha_i^{\vee},\alpha_j^{\vee}) + C(\alpha_j^{\vee},s_j(\alpha_i^{\vee})) = \B(\alpha_i^\vee, \alpha_j^\vee) - \alpha_j(\alpha_i^\vee) Q(\alpha_j^\vee).
\ee
Therefore 
the element $C$ is killed by the second differential if and only if the quadratic from $Q(y)$ on $Y_{\rm sc}$ is W-invariant  \cite[Lemma 4.5]{BD}. 
So the cohomology class of a cocycle in $\bigoplus\limits_{W^{(1)}}X$ is non-trivial if and only if the corresponding quadratic form is non-zero.\\

Now let us  look at the cocycle that we constructed using the cluster coordinates:
$$
{\rm C}^{(3)} = \sum_{k \in I} d_k\cdot \Bigl(\{A_k = 0\}, ~~\Delta_{k, s_k\omega_0} ^{-1}\prod_{i \in I \setminus \{k\}}   A_{i}^{\frac{C_{ki}}{2}}  \Bigr).
$$
Here is a caveat. There are two ways to write the Bruhat decompostions of an element $g\in G$:  
$$
g = u_1 h_l \overline \omega u_2 ~~\mbox{or}~~ g = u_1  \overline \omega h_r u_2, ~~~~h_l, h_r \in \rH.
$$
Following \cite{GS19},  we defined    ${\rm C}^{(3)} $   using the left one, 
while  \cite{BD}  use   the right one.
Since  $ \overline \omega h_r    = 
    \omega(h_r)  \overline \omega$, we have    $h_l = \omega(h_r)$. 
     This   impacts our formulas as follows. 
The function $\Delta_i(g) = \Lambda_i( h_l )$   on the Bruhat cell $B\omega_0B$    is equal to the function   
$$
\Lambda_i(\omega_0 (h_r)) = \Lambda_{i^*}(h_r).
$$
 Since  
 $s_k\omega_0 = \omega_0 s_{k}^*$, and 
   $\Delta_{k,s_k \omega_0} = \Lambda_k(h_l)$ on the divisor $Bs_k\omega_0B = B\omega_0s_k^*B$, we have $h_l = s_k\omega_0(h_r)$, and so  as $s_k \alpha_{k}^{\vee} (t)=  \alpha_{k}^{\vee} (t)^{-1}$, we have 
  $$
 \Lambda_k( h_l)  = \Lambda_k(s_k \omega_0(h_r)) = \Lambda_k^{-1}(\omega_0(h_r)) = \Lambda_{k^*}^{-1}(h_r).
 $$
Since  $*$ is an involution, we have $C_{ij} = C_{i^*j^*}, d_k = d_{k^*}$. 
 So the  element ${\rm C}^{(3)} \in \bigoplus_{W^{(1)}} X$   is:
\be
\begin{split}
  {\rm C}^{(3)}= 
  & \sum_{k \in I} d_k \cdot \Bigl({\cal B}_{\omega_0 s_{k}}, ~~\Delta_{k, \omega_0s_k}\prod_{i \in I \setminus \{k\}}    (  \Delta_{i, w_0})^{\frac{{\rm C}_{ki}}{2}}      \Bigr). \\
 \end{split}
  \ee
In  the Brylinski-Deligne format 
the Bruhat divisor ${\cal B}_{w_0s_k}$   corresponds to  $\omega_0s_k\in W^{(1)}$, and  identified  with the basis element dual to the coroot $\alpha_k^{\vee}$. 
Then the element  ${\rm C}^{(3)}$ is mapped to 
\be \la{KF}
 {\rm C}^{(3)} \lms \sum_{k \in {\rm I}} d_k \cdot   \Lambda_k \otimes  \Lambda_k + \sum_{i \in {\rm I}\setminus \{k\} }\frac{d_kC_{ki}}{2}  \Lambda_k \otimes \Lambda_i.
\ee
 Since $ d_k{\rm C}_{ki}$ is symmetric, we get a symmetric tensor, which is  the Killing quadratic form   in the basis   $\Lambda_k$:
 \be \la{KF1}
 {\rm C}^{(3)} \lms   \sum_{k \in {\rm I}} d_k \cdot   (\Lambda_k )^2 + \sum_{i \not = k \in  {\rm I}  } d_kC_{ki} \cdot  \Lambda_k  \Lambda_i. \ee
The corresponding quadratic form at any simple coroot $\alpha_i^{\vee}$ corresponding to a short root  $\alpha_i$  ($d_i$ = 1) is:
$$
Q(\alpha_i^{\vee}) = h_{sc}(\alpha_i^{\vee},\alpha_i^{\vee}) = 1.
$$
 Note that if simple roots $\alpha$ and $\beta$ are not orthogonal, and $\alpha$ is not shorter than $\beta$, then $\beta(\alpha^\vee) =-1$.  
 
 Although the cocycle  ${\rm C}^{(3)}$ has half-integral coefficients, we can alter it by a coboundary and get an integral cocycle.  
 Therefore  the cohomology class $[{\rm C}^{(3)}]$   is the canonical generator. Theorem \ref{TH1} is proved. \\
 
Since we   proved  that ${\rm C}^{(\bullet)}$ is a cocycle by combining Theorem \ref{BTH}, formula (\ref{BCA1}), and Theorem \ref{GUR}, and 
 its cohomology class is a generator by Theorem \ref{TH1} and   isomorphisms (\ref{BD1}),  Theorem \ref{Th1.3} is proved.

\section{Applications}  \la{Appl}
  
 It was proved in \cite{GS19} that the class $[W]\in K_2({\rm Conf}_3({\cal A}))$     is dihedrally sign-invariant, that is 
 invariant under the cyclic shifts, and skew invariant under the permutation of two vertices $(1,2,3) \lms (2,1,3)$.
 Therefore the class $[{\rm C}^{(2)}] \in K_2({\rm Conf}^\times_3({\cal A}))$    is dihedrally sign-invariant.  This implies  the important 
 
 \bp \la{71} The element 
\be
 {\rm C}^{(1)} \in \B_2(\Q({\rm Conf}_4({\cal A}))) \otimes \Q
 \ee is sign-invariant  under the action of the permutation group $S_4$ on ${\rm Conf}_4({\cal A})$.
  \ep
  
  \begin{proof}  The dihedral sign-invariance of    $[{\rm C}^{(2)}]$ implies that $\delta {\rm C}^{(1)}$ is   dihedrally sign-invariant. Since 
   ${\rm Conf}_4({\cal A})$ is a rational variety,  the group $K_3^{\rm ind}\otimes \Q$ of its function field is the same as for $\Q$, and thus trivial.   
  \end{proof}
 
   \paragraph{1. The universal $K_2-$extension of $\G$.} Its  existence   was proved by Matsumoto, and revisited by Brylinsky-Deligne \cite{BD}. 
   However no explicit cocycle description was known before. Here is one. 
   
   Pick a decorated flag ${\cal F}\in {\cal A}_\G$. Then given a generic triple $(g_1, g_2, g_3) \in \G^3(\F)$ we set
   \be
   {\cal C}_2(g_1, g_2, g_3):= {\rm C}^2(g_1 \cdot  {\cal F},   g_2 \cdot  {\cal F},   g_3 \cdot  {\cal F}) \in K_2(\F).
   \ee
Then for any generic quadruple $(g_1, g_2, g_3, g_4)$ it satisfies the  2-cocycle condition.  
   It is well known that a 2-cocycle   of $\G(\F)$ with values in $K_2(\F)$, defined at the generic point, determines the group extension
   \be
   1 \lra K_2(\F) \lra \widehat \G(\F) \lra \G(\F) \lra 1.
   \ee
   
 \paragraph{2. The Kac-Moody group extension related to a Riemann surface.} Let $\Sigma$ be a Riemann surface with punctures. 
 Then there is a group extension
 \be
 1 \lra \rH^1(\Sigma, \C^*) \lra \widehat \G_\Sigma \lra \G({\rm Hol}(\Sigma))\lra 1.
 \ee
 Here ${\rm Hol}(\Sigma)$ stands for the field of all holomorphic functions with arbitrary singularities at the punctures, 
 including the functions with essential singularities at the punctures, e.g. $e^{c_1/z + c_2/z^2+ ...}$. 
 There is an algebraic  variant where we take  the field of rational functions on $\Sigma$ and the corresponding group $\G(\C(\Sigma))$. 
 
 The extension is the push down of the universal extension of $\G({\rm Hol}(\Sigma))$ by $K_2({\rm Hol}(\Sigma))$ by the Beilinson-Deligne regulator map
 \be
 {\rm reg}: K_2({\rm Hol}(\Sigma)) \lra  \rH^1(\Sigma, \C/\Z(2)).
 \ee
 Namely, following Beilinson \cite[Lemma 1.3.1]{Be} and Deligne \cite{D}, given   an element $f\wedge g$ and a loop $\gamma$ on $\Sigma$,  the value of the cohomology class ${\rm reg}(f\wedge g)$ on the homology class $[\gamma]$ is given by the integral
 \be
  \langle {\rm reg}(f \wedge g), \gamma\rangle  := {\rm exp}~ \frac{1}{2\pi i} \cdot \Bigl(\int_{\gamma}\log f d\log g - g(p) \int_\gamma d\log f\Bigr) \in \C^\times.
  \ee
  Here $p$ is a point on $\gamma$ and the integrals start from $p$.  The result is independent of the choices of the branch of $\log f$  and the initial point $p$. 
  
It is   important for some applications that the  construction works for the group defined using all holomorphic functions on a punctured Riemann surface,  rather than just 
the meromorphic ones.

 In particular, in the special case when $\Sigma = \C^\times$ and $\gamma$ is a loop around zero, we get a holomorphic variant of the Kac-Moody group extension: 
 \be \la{KMG}
1 \lra \C^\times \lra \widehat \G({\rm Hol}(\C^\times)) \lra \G({\rm Hol}(\C^\times)) \lra 1, 
\ee

\paragraph{3. The determinant line bundles.} Using (\ref{KMG}), we get  an explicit construction of the determinant line bundle on the affine Grassmannian $\widehat \G((t))/ \G({\cal O})$.  
Similarly, we get  an explicit  construction of the determinant line bundle on  ${\rm Bun}_\G$.

\paragraph{4. $K_2-$Lagrangians in  moduli spaces of $\G-$local systems on $S$.}    Recall    the moduli space ${\cal U}_{\G, S}$  of     
  $\G-$local systems on a  punctured surface $S$, with unipotent monodromies around the punctures,  and a reduction to a Borel subgroup at each puncture, called a framing.

Let ${\rm M}$ be a threefold whose boundary   $\overline S$ is obtained by filling the punctures on $S$. Consider 
 the subspace ${\cal M}_{\G, M}\subset {\cal U}_{\G, S}$  parametrising  framed unipotent 
$\G-$local systems on $S$ which   extend to  
 ${\rm M}$.
 
 \bt   i) The moduli space ${\cal U}_{\G, S}$ is $K_2-$symplectic. 
 
ii)  The moduli subspace  ${\cal M}_{\G, M}$ is a $K_2-$Lagrangian subspace of the moduli space ${\cal U}_{\G, S}$. 
 
iii) There is the  {\it motivic volume} map, defined  at the generic point ${\cal M}^\circ_{\G, {\rm M}}$ of ${\cal M}_{\G, {\rm M}}$, with values  in the Bloch group of $\C$:
 \be \la{72}
{\rm Vol}_{\rm mot}:  {\cal M}^\circ_{\G, {\rm M}} \lra \B_2(\C).
  \ee
 \et
 
 \begin{proof} Pick a cocycle ${\rm C}^{(\bullet)}$ representing the class ${\rm c}_2$.
 
 i) Take an ideal triangulation ${\cal T}$ of $\overline S$, i.e. a triangulation with the vertices at the punctures. Take a generic framed $\G-$local system ${\cal L}$ on $S$. 
 Since its monodromy around each puncture  is a regular unipotent element, there exists a unique decorated flags ${\cal F}_p$ near every puncture $p$ which is invariant under the monodromy around $p$. 
 For each triangle $t$ of ${\cal T}$, there is a configuration of three decorated flags $({\cal F}^t_1, {\cal F}^t_2,  {\cal F}^t_3) \in {\rm Conf}_3({\cal A}_\G)$ 
 obtained by restricting the   ${\cal L}$ and the three flat sections of the associated to ${\cal L}$  local system of decorated flags  near each vertex of $t$ to the triangle $t$. Then we have an element 
 \be \la{ELK2}
 W^{\cal T}_S:=\sum_{t \in {\cal T}}{\rm C}^{(2)}({\cal F}^t_1, {\cal F}^t_2,  {\cal F}^t_3) \in K_2(\Q({\cal U}_{\G, S})).
 \ee
 Its tame symbol is zero. Indeed, the tame symbol of each of the terms is a sum of the three standard terms provided by the element ${\rm C}^{(3)}$, 
 associated with the edges of the triangle $t$, but for each edge $E$, the contributions 
 of the two triangles cancel each other. The element $W^{\cal T}_S$ does not depend on the choice of the triangulation ${\cal T}$ since a flip of the triangulation 
 ${\cal T} \to {\cal T}'$ at an edge $E$ amounts to
 \be
 W^{\cal T}_S -   W^{{\cal T}'}_S = \delta {\rm C}^{(1)}({\cal F}^r_1, {\cal F}^r_2,  {\cal F}^r_3, {\cal F}^r_4), ~~~~~~.
  \ee
  where $r$ is the rectangle of the triangulation associated with the edge $E$, and  $({\cal F}^r_1, {\cal F}^r_2,  {\cal F}^r_3, {\cal F}^r_4)\in {\rm Conf}_4({\cal A}_\G)$ is the quadruple of flags associated to the rectangle. \\

 ii) Take a triangulation ${\cal T}_{\rm M}$ of the threefold ${\rm M}$ extending the triangulation ${\cal T}$ of $\overline S$. Then just as  above, one assigns to each tetrahedron ${\rm T}$ of this triangulation 
 a configuration of $4$ decorated flags $({\cal F}^{\rm T}_1, {\cal F}^{\rm T}_2,  {\cal F}^{\rm T}_3, {\cal F}^{\rm T}_4) \in {\rm Conf}_4({\cal A}_\G)$ and apply to it the element ${\rm C}^{(1)}$:
 \be \la{ELK3}
{\rm Vol}^{}_{\rm mot}: = \sum_{{\rm T} \in {\cal T}_{\rm M}}{\rm C}^{(1)}({\cal F}^{\rm T}_1, {\cal F}^{\rm T}_2,  {\cal F}^{\rm T}_3, {\cal F}^{\rm T}_4) \in \B_2(\Q({\cal M}_{\G, {\rm M}})).
 \ee 
 This element is sign-invariant under the action of the group $S_4$ by Proposition \ref{71}, and thus does not depend on does not depend on the choice of the order of the four flags.
 It also does not depend on the triangulation. Indeed, altering a triangulation by a 2 by 3 Pachner move related to the five tetrahedra whose vertices are decorated by the five flags 
 ${\cal F}_1, \ldots,  \widehat {\cal F}_i, \ldots , {\cal F}_5$  amounts to changing element 
 (\ref{ELK3}) by 
 \be\la{zbg}
 \sum^5_{i=1}{\rm C}^{(1)}({\cal F}_1, \ldots,  \widehat {\cal F}_i, \ldots , {\cal F}_5) \in \B_2(\Q({\cal M}_{\G, {\rm M}})).
\ee
 The cocycle property of ${\rm C}^\bullet$ implies that applying the  Bloch complex differential $\delta$  to (\ref{zbg}) we get zero:
  $$
 \delta \sum^5_{i=1}{\rm C}^{(1)}({\cal F}_1, \ldots,  \widehat {\cal F}_i, \ldots , {\cal F}_5) =0.
 $$ 
 Therefore    $(\ref{zbg})=0$   by a $K-$theoretic argument very similar to the one in the proof of Proposition \ref{71}.
 
 Next, denote by   $j: {\cal M}_{\G, {\rm M}} \subset {\cal U}_{\G, S}$ the natural inclusion. Since ${\rm C}^{(\bullet)}$ is a cocycle:
 \be
 \begin{split}
 \delta {\rm Vol}^{}_{\rm mot} =  &\sum_{{\rm T} \in {\cal T}_{\rm M}}\delta {\rm C}^{(1)}({\cal F}^{\rm T}_1, {\cal F}^{\rm T}_2,  {\cal F}^{\rm T}_3, {\cal F}^{\rm T}_4)\\
&=\sum_{{\rm t} \in {\cal T}} {\rm C}^{(2)}({\cal F}^{\rm t}_1, {\cal F}^{\rm t}_2,  {\cal F}^{\rm t}_3)  \\
&\stackrel{{\rm def}}{=}j^* W_S^{\cal T} \in \Lambda^2\Q({\cal M}_{\G, {\rm M}})^*.\\
 \end{split}
 \ee
The second $=$ is  because the contributions of the internal triangles cancel out. The third equality is valid by the definition of   $j^*$. 
 Therefore  $[j^* W_S^{\cal T} ]=0$ in $K_2(\B_2(\Q({\cal M}_{\G, {\rm M}})))$. The  claim ii) is proved.  \\
 
 iii) Specializing the element (\ref{ELK3}) to any generic complex point of $x$ we get the motivic volume map (\ref{72}).   
 Its composition with the map  $\B_2(\C)\lra \R$, provided by the Bloch-Wigner dilogarithm, is a volume map, generalizing the volume of a hyperbolic threefold. 
  \end{proof}
  
 \paragraph{5. A local combinatorial formula for the second Chern class of a $\G-$bundle.} 
Recall the weight two exponential complex of sheaves on a complex manifold $X$ \cite{G15}: 
\be
\Z(2)\lra {\cal O}(1)  \lra \Lambda^2{\cal O} \stackrel{\wedge^2{\rm exp}}{\lra}   \Lambda^2 {\cal O}^*.
  \ee
 Here the second arrow is $2\pi i \otimes f\lms 2\pi i  \wedge f$, and the last one is $f \wedge g \lms {\rm exp}(f) \wedge {\rm exp}(g)$. 
  It is a complex of sheaves in the analytic topology on $X$, exact modulo torsion.

We sheafify the Bloch complex to a complex of sheaves 
$B_2({\cal O}) \lra \Lambda^2 {\cal O}^*$ and define  
  a map of complexes 
 \begin{displaymath}
    \xymatrix{
& \ar[dl]{  \rm R}_2({\cal O}) \ar[d]\ar[r]&{  \Z}[{\cal O}]     \ar[r]^{\delta}   \ar[d]_{{\Bbb L}_2} &  \Lambda^2 {\cal O}^*  \ar[d]^{=}  \\
\Q(2)\ar[r]  &{\cal O}(1)  \ar[r]  &\Lambda^2{\cal O} \ar[r]^{\wedge^2{\rm exp}}&  \Lambda^2 {\cal O}^*    \\}
         \end{displaymath} 
  To define the  map ${\Bbb L}_2$,    recall the dilogarithm function, with the two accompanying logarithms: 
\be \la{4.10.15.1}
{\rm Li}_2(x):= \int_0^x\frac{dt}{1-t}\circ \frac{dt}{t}, ~~~~-\log (1-x)= \int_0^x\frac{dt}{1-t},
~~~~ \log x:= \int_0^x \frac{dt}{t}.
\ee
Here all integrals are along the same path from $0$ to $x$. The last one is regularised  
using the tangential base point  at $0$ dual to $dt$.  
Then  we set, modifying slightly the original construction of   Bloch \cite{B77}-\cite{B78},  
\be \begin{split}
&{\rm L}_2(x) :=  {\rm Li}_2(x) + \frac{1}{2}\cdot  \log (1-x) \log x +\frac{(2\pi i)^2}{24},  \\
&{\Bbb L}_2(\{x\}_2):= \frac{1}{2}\cdot \log(1-x) \wedge \log x + 2\pi i \wedge 
\frac{1}{2\pi i}{\rm L}_2(x).\\
\end{split}
\ee
We keep the summand $\frac{(2\pi i)^2}{24}$ in ${\rm L}_2(x)$, although 
it does not change $2\pi i \wedge 
\frac{1}{2\pi i}{\rm L}_2(x)$  since  $2\pi i\wedge \frac{2\pi i}{24}=0$ in $\Lambda^2\C$.   The key fact is \cite[Lemma 1.6]{G15}
the map ${\Bbb L}_2$ is well defined on $\Z[{\cal O}]$, i.e. does not depend on the monodromy of 
the logarithms and the dilogarithm along the path $\gamma$ in (\ref{4.10.15.1}).  It evidently provides a map of complexes. 
So one has ${\Bbb L}_2: {\rm Ker}~\delta \lra  {\cal O}(1)$. Furthermore, we have  
\be
{\Bbb L}_2({\rm Ker}~\delta) \subset \C(1), ~~~~~~{\Bbb L}_2({\rm R}_2({\cal O})) \subset   \Q(2).
\ee 

Given a $\G-$bundle ${\cal L}$ over a complex manifold $X$, pick an open   by   discs $\U_i$ and choose a section $g_i$ of ${\cal L}$ over $\U_i$. Then we define a 4-cocycle for the Chech cover $\{\U_i\}$ with values in 
$  \Q(2)$ by setting 
\be
\U_{i_1}\cap \ldots \cap \U_{i_5} \lms \sum_{k=1}^5 (-1)^k  {\Bbb L}_2\Bigl({\rm C}^{(1)}(g_{i_1}, \ldots, g_{i_k }, \ldots, g_{i_5 })\Bigr) \in  \Q(2). 
\ee
The main result of this paper implies that it represents the  second Chern class $c_2({\cal L})$. 
 This  is a local combinatorial formula for $c_2({\cal L})$, in the spirit of the  Gabrielov-Gelfand-Losik  combinatorial formula  \cite{GGL} for the first Pontryagin class. 
 See an elaborate discussion of the simplest example  in \cite[Section 1.7]{G15}.\\
 
 We  conclude   that, although given a cocycle ${\rm C}^\bullet$ the above constructions are very transparent, the cocycle itself for $\G \not = {\rm SL_m}$ 
 is rather complicated, 
 and can not be written  without    the  cluster technology. On the other hand, 
 for $\G={\rm SL_n}$ the cocycle is  simple  and   canonical, see \cite{G93}, \cite[Sections 4.2-4.3]{G15}. 

\section{Quantum deformation of the  cohomology group  ${\rm H}^3_{\rm meas}(\G(\C), \R)$.} \la{S9}

The cluster construction of the second motivic Chern class provides at the same time  its quantum deformation. Let us 
  explain a part of the story:  quantum deformation of the measurable  cohomology class 
   $$
 \beta_3 \in {\rm H}^3_{\rm meas}(\G_\C, \R)
 $$
Recall that a $3-$cocycle representing the class $\beta_3$  is a measurable function $\beta_3(g_1, ..., g_4)$ on 
$\G_\C^4$. Our construction of the element ${\rm C}^{(1)}$ gives an explicit formula for this function as a sum  
  of  Bloch-Wigner dilogarithms:
\be \la{pop}
  \beta_3(g_1, ..., g_4) = \sum_j {\cal L}_2(z_j), ~~~~g_i \in \G(\C).
\ee
      Here $z_i$ are certain rational functions on $(\G/\B)^4$.    Let us define a  quantum deformation of this cocycle.\\
      
Recall the cyclically invariant cluster Poisson  structure 
on $\G^4$ \cite{GS19}, providing a q-deformed algebra of regular functions 
${\cal O}_q(\G^4)$. The quantum analog of  cocycle $\beta_3$ lies in its a  formal completion: 
\be \la{BBBB}
{\cal B}_3\in \widehat {\cal O}_q(\G^4).
\ee
The element ${\cal B}_3$   satisfies the following analog of the 3-cocycle relation. Recall the natural 
 projections 
 $$
 s_i:\G^5 \lra \G^4, ~~~~\mbox{where} ~~ i \in \Z/5\Z,
 $$
They are  cluster Poisson maps. Therefore they give rise to a map of algebras
 $$
 s_i^*: \widehat {\cal O}_q(\G^4) \lra \widehat {\cal O}_q(\G^5).
  $$
 \bt
 There is an element ${\cal B}_3$ in (\ref{BBBB}) which satisfies the following  quantum cocycle rcondition: 
 \be \la{CALLB}
 \prod_{j=1}^5 s_{2j+1}^*{\cal B}_3=1. 
\ee
\et

\begin{proof} The space $(\G/\B)^4$ carries a cyclically invariant cluster Poisson structure.  Therefore we get the algebra ${\cal O}_q((\G/\B)^4)$. The projection
$
\pi: \G^4 \lra (\G/\B)^4
$ 
is a cluster Poisson map, providing the  map
$$
\pi^*: {\cal O}_q((\G/\B)^4) \lra {\cal O}_q(\G^4).
$$
Let us define an element 
$$
{\bf B}_3 \in {\cal O}_q((\G/\B)^4)
$$
which satisfies the analog of the cocycle condition  
\be \la{wewe}
\prod_{j=1}^5 s_{2j+1}^*{\bf B}_3=1.
\ee
 Then we set
$$
{\cal B}_3:= \pi^*{\bf B}_3. 
$$
It automatically satisfies the cocycle condition. Let us implement this program.\\

 Recall   the quantum dilogarithm power series, convergent if $|q|<1$ for any $Z\in \C$:
 $$
 \Psi_q(Z) = \frac{1}{(1+qZ) (1+q^3Z) (1+q^{5}Z) ...}
 $$
  The element ${\bf B}_3$  is defined as a certain product of the quantum dilogarithm power series 
\be \la{SEQC}
 {\bf B}_3 = \prod_j \Psi_q({Z}_j), ~~~~Z_j \in {\cal O}_q((\G/\B)^4).
 \ee
  The functions $\{Z_j\}$ are   $q-$deformations of  functions $z_j$ in (\ref{pop}),   $\lim_{q\to 1}Z_j = z_j$, defined as follows. \\
 
  Pick a reduced decomposition ${\bf i}$ of the longest element $w_0$.  
   Consider a quadrilateral $Q$ with a special side $F$,   and a diagonal $E$. There are 
  two marked angles   of the quadrilateral: the one in the triangle with the base $F$, opposite to the base, 
  and the one in the second triangle, opposite to the diagonal $E$. Each of the two triangles is counterclockwise oriented. Therefore the reduced decomposition ${\bf i}$ provides a cluster Poisson coordinate system on the moduli space 
  ${\cal P}_{\G, 3}$ assigned to each triangle, and hence, by the amalgamation, a cluster Poisson structure on ${\cal P}_{\G, 4}$  
  and thus on ${\rm Conf}_4({\cal B})$. 
  
Take a pentagon ${\rm P}_5$ decorated by the  flags $\B_1, ..., \B_5$, providing a point of the moduli space 
  ${\rm Conf}_5({\cal B})$. Take a triangulation ${\rm T}_1$ of the pentagon. Pick one of the diagonals and denote it by $\F$. 
  It determines a quadrilateral $Q_F$ inscribed into the pentagon, which has $F$ as 
  its side, and the triangle $t_E$  outside of $Q_F$, with the base $E$.  Mark the angle of the triangle $t_E$ opposite to the side $E$. 
  Then each of the three triangles of the pentagon has a marked angle. Therefore the reduced decomposition ${\bf i}$ provides a cluster Poisson coordinate system on the  space  ${\rm Conf}_5({\cal B})$. Now  flip the triangulation given by the edges $(E,F)$ 
  at the edge $F$.   Relabel the new pair of  edges $(E, E_1)$   as $(F_1, E_1)$, setting $F_1:=E, E_1:=E$. 
  Then we can assign to the new triangulation $(E_1, F_1)$ a similar cluster Poisson coordinate system on ${\rm Conf}_5({\cal B})$. 
 \vskip 2mm

 {   The key point is that the flip of   triangulation at the edge $F$ alters cluster Poisson coordinates only in  the quadrileteral $Q_F$, that is determined by the four flags at its vertices. It is realized as an ordered sequence of mutations, provided by    the cluster Poisson rational functions $Z_1, ..., Z_N$ 
 on  ${\rm Conf}_4({\cal B})$. Each mutation is given by the conjugation by $\Psi_q(Z_i)$. We use the   sequence 
 $\{Z_j\}$ to define the element ${\bf B}_3$ in (\ref{SEQC}). The elements $\{z_j\}$ in (\ref{pop}) are defined as the $q=1$ specialization of the elements $\{Z_j\}$. } 
 \vskip 2mm
  
   The main difference between the classical and quantum cocycles $\beta_3$ and ${\bf B}_3$ is that the elements $\{Z_i\}$ do not commute, and so  their order is an essential  part of the definition of the element ${\bf B}_3$. 
   \vskip 2mm
   
   Traditionally each mutation is given by the conjugation by $\Psi_q(Z)$ followed by a monomial transformation, and a cluster Poisson transformation is defined as a composition of such elementary transformations. However one can also 
 define a {\it reduced mutation} as just  the conjugation by $\Psi_q(Z)$, and define the {\it reduced cluster Poisson transformation} as 
 the composition of reduced mutations  \cite[Proposition 2.4]{GS16}. 
  \vskip 2mm

Performing this procedure five times, we get the original triangulation $(E,F)$, as well as the original cluster Poisson coordinate system. The sequence of cluster Poisson coordinates given  by the sequence of mutations realizing the flip of the diagonal $F_i$ on the step $i$ is denoted by 
 $Z^{(i)}_1, ..., Z^{(i)}_N$.  Then the ordered sequence of cluster Poisson coordinates we need is given by the $5N$ functions 
 \be
 Z^{(1)}_1, ..., Z^{(1)}_N;  ~Z^{(1)}_1, ..., Z^{(1)}_N; ~\ldots ; ~Z^{(5)}_1, ..., Z^{(5)}_N.
  \ee
  \bp
  The following product is equal to $1$:
\be \la{SEQC1}
 \prod_j \Psi_q({Z}^{(5)}_j) \cdot  \prod_j \Psi_q({Z}^{(4)}_j)  \cdot \ldots \cdot  \prod_j \Psi_q({Z}^{(1)}_j) =1.
\ee 
\ep

 \begin{proof} If a reduced cluster transformation 
 is the identity map, then the product of the corresponding $\Psi_q(Z_i)$ in the completed q-deformed algebra is equal to $1$ 
 \cite{K12}, \cite{K13}, cf \cite[Theorem 3.2]{GS16}.   \end{proof}
 
 The relation (\ref{SEQC1}) is  equivalent to   relation (\ref{wewe}) on   elements (\ref{SEQC}), and hence to relation (\ref{CALLB}). 
  \end{proof}

To justify the name quantum dilogarithm for the formal power series $\Psi_q(z)$, recall   the 
 dilogarithm:
$$
{\rm L}_2(x):= \int_0^x\log(1+t)\frac{dt}{t} = -{\rm Li}_2(-x).
$$
It has a $q$-deformation, called the {\it $q$-dilogarithm power series}:
$$
{\rm L}_2(x;q):= \sum_{n=1}^{\infty}\frac{x^n}{n(q^{n}-q^{-n})}.
$$
One has the identity
$$
\log {\bf \Psi}_q(x)  = {\rm L}_2(x;q).
$$

If $|q|<1$ the power series ${\bf \Psi}_q(x)$ converge, providing 
an analytic function in $x \in \C$. If in addition to this $|x|<1$, the 
$q$-dilogarithm power series also converge. There are asymptotic expansions when $q\to 1^-$:
\begin{equation} \label{sine}
{\rm L}_2(x;q) \sim \frac{{\rm L}_2(x)}{\log q^2}, \qquad 
{\bf \Psi}_q(x) \sim {\rm exp}\Bigl(\frac{{\rm L}_2(x)}{\log q^2}\Bigr). 
\end{equation}
 Using this one can show that  the quantum cocycle relation (\ref{SEQC1}) implies  the classical one if  $q\to 1$.   
   \vskip 2mm   
   
   In the case when $\G = {\rm PGL}_2$, the element ${\bf B}_3$ is just the quantum dilogarithm $\Psi_3(Z)$, and our cocycle relation 
    reduces to the Faddeev-Kashaev \cite{FK} pentagon relation for the quantum dilogarithm. \\
    
    Note also that there is a version of the quantum cocycle where the role of the power series $\Psi_q(Z)$ is played by the quantum modular 
    dilogarithm $\Phi_\hbar(z)$. The main difference is that now the cocycle is well defined for any $q \in \C$, and 
    is understood as an operator acting in a Hilbert space.

\section{Cluster structures and motivic cohomology: conclusion}

\paragraph{1. Conclusions.} 1. Formula (\ref{KF}) tells that the cocycle ${\rm C}^{(3)}$ is just the Killing form (\ref{KF1}), written as a bilinear form (\ref{KF}),   
translated isomorphically into  the middle group  in 
 (\ref{MGR}),   thus interpreted   as  
a cocycle for  
$\rH^1(\G, \underline K_2)$.  
To make  the bilinear form (\ref{KF}) from  the quadratic form (\ref{KF1}) we need the coefficients $\frac{1}{2}$ in front of $d_k{\rm C}_{kj}$. 
Indeed, the left and the right factors in the bilinear expression (\ref{KF}) have entirely different meanings in  (\ref{MGR}) as, respectively, Bruhat divisors and functions on them. 
A posteriori this explains why the exchange matrix $\varepsilon_{ij}$ has half integral values between the frozen variables. \\

2. The cluster structure of the elementary variety ${\cal A}(k)$,   $k\in {\rm I}$, is  determined by the  following  facts:

i) The corresponding element 
 $W(k)$ is decomposed into a sum of two terms  
 $$
 W(k) = W'(k) + W_\Delta(k),
 $$
  where $W_\Delta(k)$ is the pull back $\tau_k^*$ 
 of the element $W$ from the space ${\rm Conf}_3({\cal A}_{\rm SL_2}) $ for the canonical projection
 \be
 \tau_k: {\cal A}(k) \lra {\rm Conf}_3({\cal A}_{\rm SL_2}).
 \ee
 
 ii)   The residue of  $W(k)$ at the "right side of the quiver"  is given by the cocycle ${\rm C}^{(3)}$. 
 
 Equivalently, 
the cocycle ${\rm C}^{(3)}$ is the residue of   $W_{\bf c}$ at the right side of the triangle $t$.     

Indeed,   the tame symbol calculation (\ref{WC}) nails the shape of the   quiver ${\rm J}(k)$ of   ${\cal A}(k)$. 
Namely, the exchange matrix for the right   side of the quiver ${\rm J}(k)$ is the negative of the one for the left edge, as the argument 
in the end of the proof of Theorem \ref{BTH} shows. It is determined by the   cocycle ${\rm C}_3$, and the latter is fixed by the Killing form, as   discussed above.    \\

3. The element $W_{\bf c}$ on ${\rm Conf}_3({\cal A})$  determines the cluster structure on this space. 
The element $W_{\bf c}$ is forced onto us as 
 the one whose tame symbol is given by formula (\ref{WC}).  Therefore its  existence   follows from   $\rH^4(\B\G, \Z_{\cal M}(2))=\Z$. 

 Although such an element $W_{\bf c}$ is not unique, the difference between any 
  two  $W_{\bf c}$  and 
 $W'_{\bf c}$  of them   is a cocycle, providing a class  $[W_{\bf c} -W'_{\bf c} ] \in \rH^0(\G^2, \underline K_2)/K_2(\Z)$.  Note that $K_2(\Z) = \Z/2\Z$. On the other hand,  
\be \la{NOKT}
 \rH^0(\G^2, \underline K_2)/K_2(\Z)=0.
  \ee
  This  implies the crucial, and one of the most challenging, properties of the element $W_{\bf c}$:     its class 
     in the group $K_2$ of the field of functions $\Q({\rm Conf}_3({\cal A}))$ is {\it twisted cyclically invariant} \cite[Section 7]{GS19}. Indeed, it  follows from    (\ref{NOKT}), 
   since the tame symbol of $W_{\bf c}$, given by (\ref{WC}),  is  twisted cyclic shift invariant   on the nose. \\
   
 4.   One also has 
 \be \la{NOKT1}
 \rH^0(\G^3, \underline K_2)/K_2(\Z)=0. 
 \ee
 This makes evident another crucial fact, this time  about the cluster structure of the space ${\rm Conf}_4({\cal A})$: the {\it flip invariance} of the   $K_2-$class of the element $W$ 
 on ${\rm Conf}_4({\cal A})$, 
 see  
  paragraph 4   in Section \ref{SECT4}. 
   Indeed, the vanishing (\ref{NOKT1}) implies that this $K_2-$class its determined by its tame symbol. The latter, as follows   from 
    (\ref{WC}),  is  the sum of the contributions of the four sides of the rectangle, and thus evidently flip invariant.    \\
 
 5.  The cluster structure of the   moduli space  ${\cal A}_{\G, \bS} $ is constructed by starting from the cluster structure of the space 
 ${\rm Conf}_3({\cal A})$. Next, using its twisted cyclic invariance, we introduce  the cluster structure on ${\rm Conf}_4({\cal A})$ via the amalgamation. The flip invariance  of the latter 
 allows to extend the construction of the cluster structure  by the amalgamation to the whole surface, and guarantees its $\Gamma_\bS-$equivariance. 
  The cluster Poisson structure of the  space  ${\cal P}_{\G, \bS} $ is deduced from this. 
  Therefore the discussion above explains, for the first time,  why the cluster structure on the dual pair of moduli spaces  $({\cal A}_{\G, \bS}, {\cal P}_{\G, \bS})$   should exist.  \\  
   
 6. The fact that the number of functions entering $W_{\bf c}$ is the same as the dimension   of ${\rm Conf}_3({\cal A})$ 
   is irrelevant for the motivic considerations described in this paper, although the collection of different clusters was used essentially to prove relation (\ref{WC}).

 However  what is needed for many applications, e.g. for the cluster quantization,  is not just the fact that the $K_2-$class $[W_{\bf c}]$   is twisted cyclically invariant, but that 
   the equivalence between different elements $W_{\bf c}$ is achieved by cluster transformations. 
   This, and the amazing fact that the number of functions entering $W_{\bf c}$ is equal to    ${\rm dim}{\rm Conf}_3({\cal A})$,   
   shows that the construction of the second motivic Chern class  capture    many, but not all, cluster features of the space ${\rm Conf}_3({\cal A})$.

\paragraph{2. Generalizations.} The truncated cocycle  $ ({\rm C}^{(2)},  {\rm C}^{(3)}) $ gives  the second Chern class in the $K_2-$cohomology:
  \be \la{C2s}
 {\rm c}^{\rm M}_2 \in \rH^2(\B\G_\bullet, \underline K_2).
 \ee 
 
 For $\G = {\rm SL}_m$,  there is   an explicit construction  of all Chern classes   in the Milnor $K-$theory \cite{G93}:
  \be \la{C2ss}
 {\rm c}^{\rm M}_m \in \rH^m(\B{\rm GL}_\bullet, \underline K^{\rm M}_m).  
 \ee
Its analogs   for other groups $\G$ is not known for $m>2$.  Note that these are the classes  \
\be \la{C2sss}
 {\rm c}^{\rm M}_{d_m} \in \rH^{d_m}(\B{\rm G}_\bullet, \underline K^{\rm M}_{d_m}).\ \ \ \ m \in \{1, ... , {\rm rk}(\G)\}.
 \ee 
  where $\{d_m\}$ are the exponents of $\G$. 
It would be very interesting to find them.   An interesting question   is whether we would need a more general notion than 
 the cluster structure to do this. 
 
Furthermore, there is an explicit construction of the third motivic Chern class, see \cite{G15}:
\be
{\rm c}_3\in \rH^6(\B{\rm GL}_{m\bullet}, \underline \Z_{\cal M}(3)).
\ee
This class is crucial to understand the Beilinson regulator for the weight $3$. However, strangely enough, the class ${\rm c}_3$ did not appear yet in any geometric/Physics applications like the ones   
 in Subsection 1.11. 

 It would be   interesting to construct  explicitly  the third motivic Chern class for any classical group $\G$. Note that although $d_1=2$, for the classical $\G$ we have $d_2=3$, while otherwise $d_2>3$.


\begin{thebibliography}{BGSV}

\bibitem[Be]{Be} Beilinson A.A.: {\it Higher regulators and special values of L-functions}. 1984. 

\bibitem[B77]{B77} Bloch S.: {\it Higher regulators, Algebraic K-theory, and Zeta functions of elliptic curves.} 
Irvine lecture notes. CRM Monograph series, AMS 2000. Original  preprint of 1977.  

\bibitem[B78]{B78} Bloch S.: {\it Applications of the dilogarithm function in algebraic K-theory and algebraic geometry}, in: Proc. Int. Symp. on Alg. Geometry, Kinokuniya, Tokyo 1978. 

\bibitem[BD]{BD} Brylinsky J.L., Deligne P.: {\it Central extensions of reductive groups by ${\bf K}_2$}. Publ. Math IHES,  94 (2001), pp. 5-85. 

\bibitem[D]{D}  Deligne P.:  {\it Le symbole mod\'er\'e}, Publ. Math. IHES 73 (1991), 147 - 181. 

\bibitem[DGG]{DGG} Dimofte T., Gabella M., Goncharov A.B.: {\it K-Decompositions and 3d Gauge Theories}   \href{https://arxiv.org/abs/1301.0192}{arXive 1301.0192.}     

\bibitem[EKLV]{EKLV} Esnault E., Kahn B., Levine M., Viehweg E. H.: {\it The Arason invariant and mod 2 algebraic cycles}, JAMS 11
(1998), no. 1, 73-118.

\bibitem[FK]{FK}Faddeev L.D., Kashaev R.:  {\it Quantum dilogarithm}. Mod.Phys.Lett. A 9 427 (1994). 
\href{https://arxiv.org/abs/9310070.}{arXive 9310070.}     
\bibitem[FG03a]{FG1} Fock V.V., Goncharov A.B. 
{\it Moduli spaces of local systems
    and higher Teichmuller theory}. Publ. Math. 
IHES, n. 103 (2006) 1-212.  \href{https://arxiv.org/abs/AG/0311149}{arXive AG/0311149.}   
\bibitem[FG03b]{FG03b} Fock V.V., Goncharov A.B.:
 {\it Cluster ensembles, quantization and the dilogarithm}. 
Ann. Sci. \'Ecole Norm. Sup. vol 42, (2009) 865-929. 
 \href{https://arxiv.org/abs/AG/0311245}{arXive AG/0311245.}   
\bibitem[FG05]{FG05} Fock V.V., Goncharov A.B.: {\it Cluster ${\cal X}-$varieties, amalgamation, and Poisson - Lie groups.}
 Algebraic geometry and number theory, 2006.  \href{https://arxiv.org/abs/math/0508408}{arXiv:math/0508408}. 
  \bibitem[FZI]{FZI} Fomin S., Zelevinsky A.: 
{\it Cluster algebras. I}. 
J. Amer. Math. Soc. 15 (2002), no. 2, 497--529.
\bibitem[GGL]{GGL} Gabrielov A., Gelfand I.M., Losik M.: 
{\it Combinatorial computation of characteristic classes} I, II,
Funct. Anal. i ego pril. 9 (1975) 12-28, ibid 9 (1975) 5-26. 
\bibitem[GZ]{GZ} Garoufalidis S., Zickert C.: {\it The symplectic properties of the $PGL(n,C)$-gluing equations,}
 \href{https://arxiv.org/abs/1310.2497.}{arXive 1310.2497}.   
\bibitem[G91]{G91} Goncharov A.B. {\it Polylogarithms and motivic Galois groups.} Proceedings of Symposia in Pure Mathematics, 
 Motives (Seattle, WA, 1991) 55, 43-96.
\bibitem[G93]{G93} Goncharov, A. B. {\it Explicit construction of characteristic classes}. 
{Advances in Soviet Mathematics,} 16, v.\ 1, 
    Special volume dedicated to 
    I.M.Gelfand's 80th birthday, 169 - 210, 1993.
    \bibitem[G15]{G15} Goncharov A.B.: {\it Exponential complexes, period morphisms, 
and characteristic classes.}  Ann. de la Faculte des Sci. de Toulouse, Vol XXV, n 2-3, (2016) pp 397-459. 
  \href{https://arxiv.org/abs/1510:07270}{arXive 1510:07270}.   
  \bibitem[GS16]{GS16} Goncharov A.B., Shen L.:  {\it Donaldson-Thomas trasnsformations of moduli spaces of G-local systems.}  
  Advances in Mathematics 327, 225-348.  \href{https://arxiv.org/abs/1602.06479}{arXive 1602.06479}.  
      \bibitem[GS19]{GS19} Goncharov A.B., Shen L.:  {\it Quantum geometry of moduli spaces of local systems and Representation Theory}.   
  \href{https://arxiv.org/abs/1904.10491}{arXive 1904.10491},   version 2.

\bibitem[K12]{K12} Keller B.: {\it Cluster algebras and derived categories}. Derived categories in algebraic geometry, 123-183, EMS Ser. Congr. Rep., Eur. Math. Soc., Z\"urich, 2012.   \href{https://arxiv.org/abs/1202.4161}{arXive 1202.4161}. 

\bibitem[K13]{K13} Keller B.:  {\it Quiver mutation and combinatorial DT-invariants}. Contribution to the FPSAC
2013. http://webusers.imj-prg.fr/ bernhard.keller/publ/index.html.

 \bibitem[LS]{LS} Laszlo, Y., Sorger Ch.: 
{\it The line bundles on the moduli of parabolic $\G-$bundles
over curves and their sections}. 
Annales scientifiques de l'ENS. 4e
s\'erie, tome 30, no 4 (1997), p. 499-525.
 \bibitem[Le]{IL}
I.~Le:
\newblock {\it Cluster structures on higher {T}eichm{\"u}ller spaces for classical
  groups.}
 \href{https://arxiv.org/abs/1603.03523}{ arXiv:1603.03523}.
 
  {\it An Approach to Cluster Structures on Moduli of Local Systems for General Groups.} 
\newblock \href{https://arxiv.org/abs/1606.00961}{arXiv:1606.00961}.

 \bibitem[LS]{LS}    Lee R., Szczarba R.: {\it The group $K_3(\Z)$ is a cyclic group of order $48$.} Annals of Mathematics, 104, (1976), 31-60.

 \bibitem[S]{S} Suslin A.A.: {\it The group $K_3$ of a field and the Bloch group} Proc. Steklov Inst. Math., 183 (1991), 217 - 239. 
 \end{thebibliography}
\end{document}